\newtheorem{theorem}{Theorem}
\newtheorem{lemma}[theorem]{Lemma}
\newtheorem{proposition}[theorem]{Proposition}
\newtheorem{corollary}[theorem]{Corollary}
\theoremstyle{definition}
\newtheorem{definition}[theorem]{Definition}
\newtheorem{remark}[theorem]{Remark}
\author{Pablo Romero\footnote{Facultad de Ingenier\'ia, Universidad de la Rep\'ublica, Montevideo, Uruguay. E-mail address: \texttt{promero@fing.edu.uy}}
\footnote{Facultad de Ciencias Exactas y Naturales, Universidad de Buenos Aires, Ciudad Universitaria. Av. Int. Güiraldes 2160. Buenos Aires, Argentina.}}
\date{}
\begin{document}

\title{Existence of most reliable two-terminal graphs\\ with distance constraints}

\maketitle

\begin{abstract}\let\thefootnote\relax
A two-terminal graph is a simple graph equipped with two distinguished vertices, called terminals. Let $T_{n,m}$ be the class consisting of all nonisomorphic two-terminal graphs on $n$ vertices and $m$ edges. Let $G$ be any two-terminal graph in $T_{n,m}$, and let $d$ be any positive integer. For each $\rho\in [0,1]$, the \emph{$d$-constrained two-terminal reliability of $G$ at $\rho$}, denoted $R_G^d(\rho)$, is the probability that $G$ has some path of length at most $d$ joining its terminals after each of its edges is independently deleted with probability $\rho$. We say $G$ is a \emph{$d$-uniformly most reliable two-terminal graph} ($d$-UMRTTG) if for each $H$ in $T_{n,m}$ and every $\rho \in [0,1]$ it holds that $R_{G}^d(\rho)\geq R_H^d(\rho)$. Previous works studied the existence of $d$-UMRTTG in $T_{n,m}$ when $d$ is greater than or equal to $n-1$, or equivalently, when the distance constraint is dropped. In this work, a characterization of all $1$-UMRTTGs and $2$-UMRTTGs is given. Then, it is proved that there exists a unique $3$-UMRTTG in $T_{n,m}$ when $n\geq 6$ and $5 \leq m \leq 2n-3$. Finally, for each $d\geq 4$ and each $n\geq 11$ it is proved that there is no $d$-UMRTTG in $T_{n,m}$ when $20 \leq m \leq 3n-9$ or when $3n-5 \leq m \leq \binom{n}{2}-2$.
\end{abstract}

\renewcommand{\labelitemi}{--}

\section{Introduction} \label{intro}
Let $T_{n,m}$ be the set consisting of all nonisomorphic two-terminal graphs on $n$ vertices and $m$ edges. 
Let $G$ be in $T_{n,m}$ and let $d$ be any positive integer. A \emph{$d$-pathset of $G$} is a spanning subgraph of $G$ 
having some path of length at most $d$ joining its terminals. For each $\rho$ in $[0,1]$, the \emph{$d$-constrained two-terminal reliability of $G$ at $\rho$}, denoted $R_G^d(\rho)$, is the probability of $G$ being a $d$-pathset after each of its edges is independently deleted with probability $\rho$. Given positive integers $d$, $n$, and $m$ such that $n\geq 2$ and $1\leq m \leq \binom{n}{2}$, the question we want to answer is whether there exists $G$ in $T_{n,m}$ such that $R_G^d(\rho) \geq R_H^d(\rho)$ for each $H$ in $T_{n,m}$ and every $\rho$ in $[0,1]$. Such a two-terminal graph $G$ is called a \emph{$d$-uniformly most reliable two-terminal graph} ($d$-UMRTTG).  

In this work we discuss the existence of $d$-UMRTTGs in each nonempty set $T_{n,m}$  
as a function of the distance $d$. This document is organized as follows. Section~\ref{section:concepts} presents basic concepts. The body of related work is covered in Section~\ref{section:background}. A characterization of all  
$1$-UMRTTGs and $2$-UMRTTGs in each nonempty set $T_{n,m}$ is given in Section~\ref{section:1and2}. 
In Section~\ref{section:3} it is proved that there exists a unique $3$-UMRTTG in $T_{n,m}$ when $n\geq 6$ and $5\leq m \leq 2n-3$. In strong contrast with the previous cases, in Section~\ref{section:contrast} it is proved that for each $d\geq 4$ and each $n\geq 11$ there is no $d$-UMRTTG in $T_{n,m}$ when $20 \leq m \leq 3n-9$ or when $3n-5 \leq m \leq \binom{n}{2}-2$.

\section{Concepts}\label{section:concepts}
Let $A$ be any finite set. We denote $|A|$ the number of elements in $A$. For each nonnegative integer $i$, 
we denote $\binom{A}{i}$ the set consisting of each of the sets composed by $i$ elements in $A$. 
All graphs in this work are finite, simple, and undirected. Let $G$ be any graph. We denote the vertex set and the edge set of $G$ by $V(G)$ and $E(G)$, respectively. Let $u$ and $v$ be vertices in $V(G)$. We say 
$u$ and $v$ are \emph{adjacent} if $uv$ is in $E(G)$. The \emph{open neighborhood of $v$}, denoted $N_G(v)$, 
is the set consisting of all vertices in $G$ that are adjacent to $v$. The \emph{closed neighborhood of $v$}, denoted $N_G[v]$, is $N_G(v)\cup \{v\}$. The \emph{degree of $v$} is $|N_G(v)|$. We say $v$ is a \emph{hanging vertex} if it has degree is $1$. 
We say $v$ is a \emph{universal vertex} if $N_G[v]=V(G)$. The vertices $u$ and $v$ are \emph{true twins} when $N_G[u]=N_G[v]$. Observe that true twins are adjacent. The graph $G$ is \emph{regular} if each of its vertices has the same degree. The graph $G$ is \emph{almost-regular} if the degrees of each pair of its vertices differ by at most one. For each vertex set $X$ in $V(G)$, 
$uX$ denotes the set consisting of all edges $uw$ in $G$ such that $w$ is in $X$. We denote the $n$-path and the $n$-complete graph by $P_n$, and $K_n$, respectively. The \emph{length} of $P_n$ is $n-1$. The \emph{distance} between $u$ and $v$ is the length of the shortest path in $G$ whose endpoints are $u$ and $v$. A \emph{subgraph of $G$} is a graph $H$ such that $V(H)\subseteq V(G)$ and $E(H)\subseteq V(G)$. If $G$ is not $K_1$ then the \emph{edge-connectivity} of $G$ is the least number of edges that must be removed to $G$ to obtain a subgraph in $G$ that is not connected. Two graphs $G$ and $H$ are \emph{isomorphic} when there exists a bijective mapping $\varphi$ between $V(G)$ and $V(H)$ such that $uv$ is in $E(G)$ if and only if $\varphi(u)\varphi(v)$ is in $E(H)$. Such a function $\varphi$ is called an \emph{isomorphism}. 
For each positive integer $n$, we denote $p_n(G)$ the number of subgraphs in $G$ that are isomorphic to $P_n$. 

A \emph{two-terminal graph} is a graph $G$ equipped with two distinguished vertices called \emph{terminals}. In what follows, we will use the symbols $s$ and $t$ to denote the terminal vertices of any two-terminal graph. A pair of two-terminal graphs $G$ and $H$ is \emph{isomorphic} if there exists an isomorphism $\varphi$ between the graphs $G$ and $H$ that preserves the set of terminal vertices. For each positive integer $d$ and each two-terminal graph $G$, a spanning subgraph $H$ of $G$ is a \emph{$d$-pathset} if $H$ has some path of length at most $d$ joining its terminals. We say a two-terminal graph $G$ in $T_{n,m}$ is 
\emph{complete} when the connected component that includes both terminals is a complete graph. 

\section{Related work}\label{section:background}
Let $G$ be any two-terminal graph in $T_{n,m}$ and let $d$ be any positive integer. For each $i$ in $\{1,2,\ldots,m\}$, let $N_i^d(G)$ be the number of $d$-pathsets of $G$ having precisely $i$ edges. For every $\rho$ in $[0,1]$, the $d$-constrained two-terminal reliability of $G$ can be written as follows:
\begin{equation}\label{equation:poly}
R_G^d(\rho) = \sum_{i=1}^{m}N_i^d(G)(1-\rho)^i\rho^{m-i}.    
\end{equation}

Sometimes we will consider for convenience the \emph{$d$-constrained two-terminal unreliability of $G$}, denoted $U_G^d(\rho)$, which is defined as $1-R_G^d(\rho)$. The key concept of this work is the following. 

\begin{definition}
Let $d$ be any positive integer. The two-terminal graph $G$ in $T_{n,m}$ is a \emph{$d$-uniformly most reliable two-terminal graph} ($d$-UMRTTG) if for each $H$ in $T_{n,m}$ and every $\rho$ in $[0,1]$ it holds that $R_G^d(\rho)\geq R_H^d(\rho)$.     
\end{definition}

Given positive integers $d$, $n$ and $m$ such that $n\geq 2$ and $1\leq m\leq \binom{n}{2}$, we want to know if there exists a $d$-UMRTTG in $T_{n,m}$. The following concept gives a sufficient condition for a two-terminal graph $G$ in $T_{n,m}$ to be $d$-UMRTTG.

\begin{definition}
Let $G$ and $H$ be in $T_{n,m}$ and let $d$ be a positive integer. We say \emph{$G$ is $d$-stronger than $H$} if for each $i\in \{1,2,\ldots,n\}$ it holds that $N_i^d(G)\geq N_i^d(H)$ and further, 
$N_j^d(G)>N_j^d(H)$ for some $j\in \{1,2,\ldots,n\}$.    
\end{definition}

Let $G$ be in $T_{n,m}$. By equation~\eqref{equation:poly}, if $G$ is $d$-stronger than any other graph $H$ in $T_{n,m}$ then $G$ is a $d$-UMRTTG. The following concepts give necessary conditions for a two-terminal graph $G$ in $T_{n,m}$ to be a $d$-UMRTTG.

\begin{definition}
Let $G$ be a two-terminal graph in $T_{n,m}$ and let $d$ be any positive integer. 
\begin{enumerate}[label=(\roman*)]
\item We say $G$ is a \emph{$d$-locally most reliable two-terminal graph near $0$} ($d$-LMRTTG near $0$) if there exists $\delta>0$ such that for each $H$ in $T_{n,m}$ and every $\rho$ in $(0,\delta)$ it holds that $R_G^d(\rho)\geq R_H^d(\rho)$.     
\item We say $G$ is a \emph{$d$-locally most reliable two-terminal graph near $1$} ($d$-LMRTTG near $1$) if there exists $\delta>0$ such that for each $H$ in $T_{n,m}$ and every $\rho$ in $(1-\delta,1)$ it holds that $R_G^d(\rho)\geq R_H^d(\rho)$. 
\end{enumerate}
\end{definition}

Clearly, each $d$-UMRTTG is simultaneously $d$-LMRTTG near $0$ and $d$-LMRTTG near $1$. Remark~\ref{remark:local} is an essential tool to find $d$-LMRTTGs near $0$ or $1$. Its proof follows directly from elementary calculus and equation~\eqref{equation:poly}.
\begin{remark}\label{remark:local}
Let $d$ be any positive integer and let $G$ and $H$ be two-terminal graphs in $T_{n,m}$.
\begin{enumerate}[label=(\roman*)]
    \item\label{it1:local} If $N_i^d(G)=N_i^d(H)$ for all $i\in \{1,2,\ldots,k-1\}$ and  
    $N_k^d(G)>N_k^d(H)$, then there exists $\delta>0$ such that for every $\rho\in (1-\delta,1)$ it holds that $R_G^d(\rho)> R_H^d(\rho)$.
    \item\label{it2:local} If $N_{i}^d(G)=N_{i}^d(H)$ for all $i\in \{j+1,j+2,\ldots,m\}$ and  
    $N_{j}^d(G)>N_{j}^d(H)$, then there exists $\delta>0$ such that for every $\rho\in (0,\delta)$ it holds that $R_G^d(\rho)> R_H^d(\rho)$.
\end{enumerate}
\end{remark}

Observe that, for each nonempty set $T_{n,m}$ and each positive integer $d$, there always exists $G$ in $T_{n,m}$ that is a $d$-LMRTTG near $0$, or near $1$. Let us illustrate how to construct the set consisting of all $d$-LMRTTGs near $1$ (the reasoning to construct all $d$-LMRTTGs near $0$ is analogous). 
Define $T_{n,m}^{d}(1)$ as $T_{n,m}$ and, for each $i\in \{1,2,\ldots,m-1\}$ define $T_{n,m}^d(i+1)$ as follows,
\begin{equation*}
T_{n,m}^d(i+1)=\{G: G\in T_{n,m}^d(i), \, N_{i+1}^d(G)\geq N_{i+1}^d(H) \text{ for each } H \text{ in } T_{n,m}^d(i)\}.
\end{equation*}

\begin{lemma}\label{lemma:existenceLMRTTG}
For each positive integer $d$ and each nonempty set $T_{n,m}$, the set consisting of all $d$-LMRTTGs 
near $1$ is nonempty and equals $T_{n,m}^{d}(m)$.
\end{lemma}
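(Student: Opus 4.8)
The plan is to translate the analytic condition of being a $d$-LMRTTG near $1$ into a purely combinatorial lexicographic condition on the coefficient vector $(N_1^d(G),\dots,N_m^d(G))$, and then to identify the recursion $T_{n,m}^d(1)\supseteq T_{n,m}^d(2)\supseteq\cdots\supseteq T_{n,m}^d(m)$ with the greedy procedure that selects the lexicographic maximizers of this vector. Throughout I would use that $T_{n,m}$ is finite, so that every maximum invoked below is attained and the ``least index at which two coefficient vectors differ'' is always well defined.

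First I would record the exact combinatorial reformulation coming from Remark~\ref{remark:local}\ref{it1:local}: for $G,H\in T_{n,m}$, writing $k$ for the least index with $N_k^d(G)\neq N_k^d(H)$ (when it exists), one has $R_G^d(\rho)>R_H^d(\rho)$ for all $\rho$ near $1$ precisely when $N_k^d(G)>N_k^d(H)$, while if no such $k$ exists the two polynomials coincide. Consequently $G$ is a $d$-LMRTTG near $1$ if and only if its coefficient vector is lexicographically maximal over $T_{n,m}$ (reading coordinates in increasing order of index). Since the lexicographic order is a total order on a finite set, a maximal vector exists, which already yields that the set of $d$-LMRTTGs near $1$ is nonempty.

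Next I would prove by induction on $i$ that each $T_{n,m}^d(i)$ is nonempty and consists exactly of those graphs whose coefficient vectors agree, on the coordinates filtered so far, with the lexicographic maximum. The base case is $T_{n,m}^d(1)=T_{n,m}$. For the inductive step, $T_{n,m}^d(i+1)$ is obtained from the nonempty finite set $T_{n,m}^d(i)$ by retaining the maximizers of the integer-valued quantity $N_{i+1}^d$; this maximizer set is nonempty, and by the inductive description it is exactly the set of graphs matching the lexicographic maximum through coordinate $i+1$. Taking $i=m$ then gives that $T_{n,m}^d(m)$ is nonempty and equals the set of lexicographically maximal graphs, which by the previous step is the set of $d$-LMRTTGs near $1$.

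The step I expect to be the main obstacle is the equivalence \emph{greedy selection $=$ lexicographic maximization}, especially the backward inclusion: I must show that discarding at stage $i+1$ every graph that fails to maximize $N_{i+1}^d$ among the current survivors never removes a globally lexicographically maximal graph, and conversely never retains a strictly dominated one. Here Remark~\ref{remark:local}\ref{it1:local} supplies the explicit competitor: a graph $G$ eliminated at stage $k$ is, by construction, tied with some survivor $H$ on all earlier coordinates and strictly smaller on coordinate $k$, whence $R_H^d>R_G^d$ near $1$ and $G$ cannot be a $d$-LMRTTG near $1$. The one delicate point I would verify carefully is the treatment of the smallest coordinate, so as to rule out a graph that survives into $T_{n,m}^d(m)$ yet is strictly dominated near $1$ on that coordinate; reconciling this with the indexing of the recursion is the only place where the argument is not completely automatic.
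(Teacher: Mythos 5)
Your proposal follows essentially the same route as the paper's proof: nonemptiness of each $T_{n,m}^d(i)$ because one maximizes an integer-valued function over a finite set, and then, for $G\in T_{n,m}^d(m)$ and $H\notin T_{n,m}^d(m)$, comparison at the least index where the coefficient vectors differ, concluded by Remark~\ref{remark:local}\ref{it1:local}. The lexicographic reformulation of ``$d$-LMRTTG near $1$'' is correct (finiteness of $T_{n,m}$ gives a uniform $\delta$), and the identification of the recursion with greedy lexicographic maximization is the intended argument.

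The ``delicate point'' you flag at the end is, however, a genuine issue that you should not leave as a promissory note — not least because the paper's own proof elides it: the recursion starts at $T_{n,m}^d(1)=T_{n,m}$ and filters on $N_{i+1}^d$ for $i=1,\dots,m-1$, so the coordinate $N_1^d$ is never compared, yet the proof asserts that $G$ and an eliminated $H$ agree on all coordinates below the elimination index, \emph{including index $1$}. To close this, note that $N_1^d(G)\in\{0,1\}$, with value $1$ exactly when $st\in G$; a $2$-edge $d$-pathset is either $\{st\}$ plus one edge or a $2$-path $sv,vt$, so a graph containing $st$ has $N_2^d\geq m-1$ while a graph without $st$ has $N_2^d\leq\lfloor m/2\rfloor$. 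Hence for $m\geq 3$ every survivor of the first filter already contains $st$ and so has $N_1^d=1$, which is maximal; the index-$1$ comparison then always favors (at least weakly) the members of $T_{n,m}^d(m)$, all of which share the same polynomial, and your argument goes through. For $m\leq 2$ the statement genuinely needs a separate check and can even fail as literally stated (e.g.\ for $m=1$ and $n\geq 3$, $T_{n,1}^d(1)=T_{n,1}$ contains graphs whose single edge is not $st$, and these are not $d$-LMRTTGs near $1$); these degenerate cases are never used later in the paper, but your instinct that the smallest coordinate is where the argument is ``not completely automatic'' is exactly right.
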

\begin{proof}
As $T_{n,m}$ is nonempty and we are maximizing a function over a finite set,  $T_{n,m}^d(1)$ as well as each of the sets $T_{n,m}^d(2),T_{n,m}^d(3),\ldots,T_{n,m}^{d}(m)$ is nonempty. 
Now, we will prove that $T_{n,m}^d(m)$ consists precisely of all $d$-LMRTTGs near $1$ in $T_{n,m}$. 
Let $G$ be in $T_{n,m}^d(m)$ and $H$ in $T_{n,m}-T_{n,m}^d(m)$. As $H$ is not in $T_{n,m}^d(m)$, 
there exists $k$ in $\{1,2,\ldots,m\}$ such that $N_i^d(G)=N_i^d(H)$ for each $i\in \{0,1,\ldots,k-1\}$ but $N_k^d(G)>N_k^d(H)$. The statement then follows from Remark~\ref{remark:local}\ref{it1:local}.
\end{proof}

\begin{remark}\label{remark:uniqueness}
If $T_{n,m}^d(m)$ consists of a single two-terminal graph $G$  then there exists $\delta>0$ such that 
for each $H$ in $T_{n,m}-\{G\}$ and every $\rho$ in $(1-\delta,1)$ 
it holds that $R_G^d(\rho)>R_H^d(\rho)$. In such a case, either $G$ is the only $d$-UMRTTG in $T_{n,m}$ or there exists $\rho_0$ in $(0,1)$ and $H$ in $T_{n,m}$ such that $R_{H}^d(\rho_0)>R_G^d(\rho_0)$, in which case no $d$-UMRTTG exists in $T_{n,m}$.     
\end{remark}

The existence or nonexistence of $d$-UMRTTGs has only been studied when the distance constrained is dropped, that is, when $d\geq n-1$. In the following, we denote LMRTTG, UMRTTG, $N_i(G)$, or $R_{G}(\rho)$ for the respective symbols $d$-LMRTTG, $d$-UMRTTG, $N_i^d(G)$ or $R_{G}^d(\rho)$ whenever 
$d\geq n-1$ (i.e., the prefix $d$ in the words or acronyms, or the superscript $d$ in the mathematical symbols, will not appear when $d\geq n-1$). 

The study of existence or nonexistence of UMRTTGs was pioneered by Bertrand et al.~\cite{2018Bertrand} and by Sun Xie and et al.~\cite{2021Xie}. In the remaining of this section we will give a list of results appearing in those works that will be useful for our purposes. First, a couple of definitions will be essential to understand such results. 

\begin{definition}
Let $n$ and $r$ be integers such that $n\geq 3$ and $r\in \{0,1,\ldots,n-3\}$. The two-terminal graph $A_{n,r}$ has vertex set $\{s,t,v_3,\ldots,v_n\}$ and edge set $\{st\} \cup \{sv_i,v_it, i\in \{3,4,\ldots,n\}\} \cup \{v_3v_{3+j}, j\in \{1,2,\ldots,r\}\}$.
\end{definition}

\begin{definition}
Let $n$ and $m$ be integers such that $n\geq 5$ and $5\leq m \leq 2n-3$. Define $H_{n,m}$ as the two-terminal graph in $T_{n,m}$ that arises from $A_{\frac{m+2}{2},1}$ by the addition of $n-\frac{m+2}{2}$ isolated vertices when $m$ is even, or 
the two-terminal graph in $T_{n,m}$ that arises from $A_{\frac{m+3}{2},0}$ by the addition of $n-\frac{m+3}{2}$ isolated vertices when $m$ is odd.
\end{definition}

The two-terminal graph $A_{n,r}$ is depicted in Figure~\ref{anr}. 
\begin{figure}
    \centering
\begin{tikzpicture}
\filldraw[black] (4.5,2) circle (2pt); 
\node[above] (s) at (4.5,2) {$v_2=t$}; 

\filldraw[black] (0,0) circle (2pt); 
\node[below] at (0,0) {$v_3$}; 
\filldraw[black] (1,0) circle (2pt); 
\node[below] at (1,0) {$v_4$}; 
\filldraw[black] (2,0) circle (2pt); 
\node[below] at (2,0) {$v_5$}; 

\filldraw[black] (2.6,0) circle (.5pt); 
\filldraw[black] (3,0) circle (.5pt); 
\filldraw[black] (3.4,0) circle (.5pt); 

\filldraw[black] (3.6,1) circle (.5pt); 
\filldraw[black] (3.8,1) circle (.5pt); 
\filldraw[black] (4,1) circle (.5pt); 
\filldraw[black] (3.6,-1) circle (.5pt); 
\filldraw[black] (3.8,-1) circle (.5pt); 
\filldraw[black] (4,-1) circle (.5pt); 

\filldraw[black] (4,0) circle (2pt); 
\node[below] at (4,0) {$v_{r+2}$}; 
\filldraw[black] (5,0) circle (2pt); 
\node[below] at (5,0) {$v_{r+3}$}; 
\filldraw[black] (6,0) circle (2pt); 
\node[below] at (6,0) {$v_{r+4}$}; 
\filldraw[black] (7,0) circle (2pt); 
\node[below] at (7,0) {$v_{r+5}$}; 

\filldraw[black] (7.6,0) circle (.5pt); 
\filldraw[black] (8,0) circle (.5pt); 
\filldraw[black] (8.4,0) circle (.5pt); 

\filldraw[black] (6,1) circle (.5pt); 
\filldraw[black] (6.2,1) circle (.5pt); 
\filldraw[black] (6.4,1) circle (.5pt); 
\filldraw[black] (6,-1) circle (.5pt); 
\filldraw[black] (6.2,-1) circle (.5pt); 
\filldraw[black] (6.4,-1) circle (.5pt); 

\filldraw[black] (9,0) circle (2pt); 
\node[below] at (9,0) {$v_{n}$}; 

\filldraw[black] (4.5,-2) circle (2pt); 
\node[below] (s) at (4.5,-2) {$v_1=s$}; 

\draw (4.5,2) -- (0,0);
\draw (4.5,2) -- (1,0);
\draw (4.5,2) -- (2,0);
\draw (4.5,2) -- (4,0);
\draw (4.5,2) -- (5,0);
\draw (4.5,2) -- (6,0);
\draw (4.5,2) -- (7,0);
\draw (4.5,2) -- (9,0);

\draw (4.5,-2) -- (0,0);
\draw (4.5,-2) -- (1,0);
\draw (4.5,-2) -- (2,0);
\draw (4.5,-2) -- (4,0);
\draw (4.5,-2) -- (5,0);
\draw (4.5,-2) -- (6,0);
\draw (4.5,-2) -- (7,0);
\draw (4.5,-2) -- (9,0);

\draw (0,0) -- (1,0);
\draw (0,0) to[bend left] (2,0);
\draw (0,0) to[bend left] (4,0);
\draw (0,0) to[bend left] (5,0);

\filldraw[black] (1.2,.35) circle (.5pt); 
\filldraw[black] (1.3,.4) circle (.5pt); 
\filldraw[black] (1.4,.45) circle (.5pt); 

\draw (4.5,2) to[out=0,in=90] (9.5,0);
\draw (9.5,0) to[out=-90,in=0] (4.5,-2);
\end{tikzpicture}
    \caption{\centering Two-terminal graph $A_{n,r}$}
    \label{anr}
\end{figure}
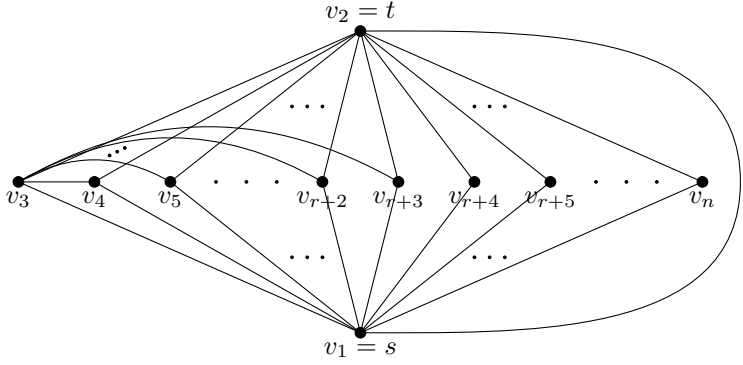
Bertrand et al.~\cite{2018Bertrand} used the construction stated in Lemma~\ref{lemma:existenceLMRTTG} to determine the only LMRTTG near $1$ in $T_{n,m}$ for some pairs of positive integers $n$ and $m$.
\begin{lemma}[Bertrand et al.~\cite{2018Bertrand}]\label{lemma:LMRTTG1}
Let $n$ and $m$ be integers such that $n\geq 5$ and $m\geq 5$. The following assertions hold.
\begin{enumerate}[label=(\roman*)]
\item If $m\leq 2n-3$ then $H_{n,m}$ is the only LMRTTG near $1$ in $T_{n,m}$.
\item If $2n-3<m\leq 3n-6$ then $A_{n,m-2n+3}$ is the only LMRTTG near $1$ in $T_{n,m}$. 
\end{enumerate}
\end{lemma}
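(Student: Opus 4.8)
The plan is to identify the set of all LMRTTGs near $1$ directly through the construction of Lemma~\ref{lemma:existenceLMRTTG}; equivalently, by Remark~\ref{remark:local}\ref{it1:local}, I would select the two-terminal graphs whose sequence $(N_1(G),N_2(G),\ldots)$ is lexicographically largest read from the smallest index upward, since near $1$ the low-index invariants dominate. I would first compute the two lowest invariants in closed form. A one-edge pathset must consist of the single edge $st$, so $N_1(G)=1$ exactly when $st\in E(G)$ and $N_1(G)=0$ otherwise; hence every LMRTTG near $1$ contains $st$. Writing $c(G)$ for the number of common neighbours of $s$ and $t$, a short case count (a $2$-edge pathset either contains $st$, giving $m-1$ subsets, or is a length-$2$ path $sv_it$, giving $c(G)$ subsets) yields $N_2(G)=(m-1)+c(G)$ when $st\in E(G)$, so the second selection step maximizes $c(G)$. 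Since a triangle on $s,t$ and a common neighbour uses three edges, one has $c(G)\le \min\{\lfloor (m-1)/2\rfloor,\, n-2\}$, and this bound is attained. The regimes $m\le 2n-3$ and $2n-3<m$ are precisely those in which the binding constraint is the edge count, respectively the vertex count, and this split produces the two cases of the statement.

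In case (i) one sets $c=\lfloor (m-1)/2\rfloor$. When $m$ is odd this uses all $m$ edges on $(m-1)/2$ triangles sharing $st$, so the non-isolated part is forced and the graph is $H_{n,m}$, with uniqueness immediate. When $m$ is even, one edge is left over after building $(m-2)/2$ triangles, and the deciding invariant is $N_3$. I would compute $N_3$ by partitioning the $3$-edge pathsets according to the length (namely $1$, $2$, or $3$) of a shortest $s$--$t$ path they contain, obtaining a term depending only on $m$ and $c$ plus a contribution of $2$ for every extra edge whose two endpoints are both common neighbours, because such an edge $v_iv_j$ creates the two length-$3$ paths $sv_iv_jt$ and $sv_jv_it$. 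A leftover edge joining two common neighbours therefore strictly beats every other placement, pinning the graph down to $H_{n,m}$.

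In case (ii) all $n-2$ non-terminal vertices are common neighbours, so after the first two steps the graph is $A_{n,0}$ augmented by an arbitrary graph $G'$ with $r=m-2n+3$ edges on those $n-2$ vertices, where $1\le r\le n-3$. The same partition by shortest-path length shows that $N_3$ depends only on $m$, $c=n-2$ and $r$, hence is constant across all choices of $G'$, so selection moves to $N_4$. Carrying the partition to $4$-edge pathsets, I expect every contribution from $s$--$t$ paths of length at most $3$ to depend only on $m,n,r$ (the number of admissible completions of a fixed short path is the same no matter where the remaining extra edges sit), whereas an $s$--$t$ path of length $4$ corresponds exactly to a length-$2$ path $v_av_bv_c$ of $G'$, each such configuration contributing its two orientations. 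Thus the only $G'$-dependent term of $N_4$ is $2\sum_{v}\binom{d_{G'}(v)}{2}$, the number of length-$2$ paths of $G'$. Maximizing this count under $r\le n-3$ selects the graph in which all $r$ edges share a single vertex (realizing the absolute maximum $\binom{r}{2}$ of mutually adjacent edge-pairs), which is precisely the extra-edge structure of $A_{n,r}$.

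The main obstacle is uniqueness in case (ii). For $r\le 2$ the maximizer of the length-$2$-path count is unique, and for $r\ge 4$ a simple graph with $r$ edges all pairwise adjacent must be a star, so $N_4$ already isolates $A_{n,r}$. The single exceptional value is $r=3$, where the triangle $K_3$ attains the same count as the star and so ties it at $N_4$; this tie must be broken at $N_5$. Here I would compare the two candidates through the dual count $N_5=\binom{m}{5}-Z_5$, where $Z_5$ is the number of $5$-edge spanning subgraphs in which $s$ and $t$ lie in different components (for $n\ge 6$ a $5$-edge spanning subgraph is a pathset exactly when $s$ and $t$ are connected, since its $s$--$t$ paths have length at most $5\le n-1$), and show that the star yields strictly fewer separating subgraphs, intuitively because concentrating the three extra edges at one apex creates more short $s$--$t$ detours than the triangle does. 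Verifying this strict inequality, and more generally confirming at every level that the only structure-dependent term is a positive multiple of the relevant path count, is the delicate part of the argument. It is also worth stressing that the star cannot be handled by the ``$d$-stronger'' criterion: a path-shaped $G'$ produces long $s$--$t$ paths and so overtakes $A_{n,r}$ at the high index $N_{r+3}$, which is why the whole argument must be run lexicographically from the smallest index.
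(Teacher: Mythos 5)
The paper does not prove this lemma --- it is imported from Bertrand et al.~\cite{2018Bertrand} --- but the method you use (building $T_{n,m}(1)\supseteq T_{n,m}(2)\supseteq\cdots$ and identifying the lexicographic maximizer of $(N_1,N_2,\ldots)$) is exactly the method the paper attributes to that source, and your computations through $N_4$ are correct: $N_1$ forces $st$; $N_2=(m-1)+c(G)$ forces $c(G)=\min\{\lfloor(m-1)/2\rfloor,n-2\}$, which correctly produces the case split at $m=2n-3$; the $N_3$ tie-break for even $m$ in case (i) is right (a $4$-edge... rather, a $3$-edge pathset contains at most one $s$--$t$ path of length $3$, so the length-$3$ contribution is exactly twice the number of edges of $\hat G$ joining two common neighbours); and in case (ii) the only $G'$-dependent term of $N_4$ is indeed $2\sum_v\binom{d_{G'}(v)}{2}$, whose maximizers with $r$ edges are the star for $r\neq 3$ and the star or the triangle for $r=3$.

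The one genuine gap is the step you yourself flag: the $r=3$ tie must actually be broken at $N_5$, and you leave that verification undone. It does go through, but not for the reason you suggest --- the star and the triangle have \emph{identical} numbers of $s$--$t$ paths of every length (six of length $3$, six of length $4$, none of length $5$), so the difference is not about ``more short detours'' but about overlap structure. The clean way to finish is the dual count you propose: a $5$-edge non-pathset is determined by $F_G\subseteq E(G')$ together with disjoint sets $A,B$ of neighbours of $s$, resp.\ $t$, with $|A|+|B|=5-|F_G|$ and no $F_G$-path from $A$ to $B$. For $|F_G|\le 2$ the component structures of the star and the triangle coincide (any two edges of either graph span a three-vertex component), so those contributions to $Z_5$ agree; for $|F_G|=3$ the star spans a four-vertex component while the triangle spans only three vertices, and the number of forbidden pairs $(A,B)$ with $|A|=|B|=1$ inside that component is $c(c-1)$ with $c=4$ versus $c=3$. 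Hence $Z_5(\text{star})=Z_5(\text{triangle})-6$ and $N_5(\text{star})=N_5(\text{triangle})+6$, completing uniqueness. (Note $r=3$ forces $n\ge 6$, so your identification of $N_5$ with $\binom{m}{5}-Z_5$ is legitimate there.) With that computation supplied, your argument is a complete proof along the intended lines; without it, case (ii) is only established for $r\neq 3$.
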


Bertrand et al.~\cite{2018Bertrand} succeeded to find, for several pairs of integers $n$ and $m$ considered in Lemma~\ref{lemma:LMRTTG1}, a two-terminal graph in $T_{n,m}$ whose two-terminal reliability is greater than that of the only LMRTTG near $1$ when $\rho=1/2$. 

\begin{lemma}[Bertrand et al.~\cite{2018Bertrand}]\label{lemma:half}
Let $n$ and $m$ be integers such that $n\geq 11$ and $20\leq m \leq 3n-9$. Define $n'=\lceil \frac{n}{3}\rceil +2$, $r'=m-2n'+3$, and the two-terminal graph $G_{n,m}$ in $T_{n,m}$ given by $A_{n',r'}\cup \overline{K_{n-n'}}$. The following assertions hold:
\begin{enumerate}[label=(\roman*)]
\item\label{half1} If $m\leq 2n-3$ then $R_{G_{n,m}}(1/2)>R_{H_{n,m}}(1/2)$.
\item\label{half2} If $2n-3<m\leq 3n-9$ then $R_{G_{n,m}}(1/2) > R_{A_{n,m-2n+3}}(1/2)$.
\end{enumerate}
\end{lemma}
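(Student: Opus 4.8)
The plan is to evaluate everything at $\rho=1/2$, where equation~\eqref{equation:poly} collapses: since $(1-\rho)^i\rho^{m-i}=2^{-m}$ for every $i$, we get $R_F(1/2)=2^{-m}\sum_{i}N_i(F)=2^{-m}C(F)$ for each $F\in T_{n,m}$, where $C(F)$ is the number of edge subsets of $F$ in which $s$ and $t$ are joined by a path. Writing $D(F)=2^{m}-C(F)$ for the number of edge subsets that \emph{separate} $s$ from $t$, we have $R_F(1/2)=1-2^{-m}D(F)$; as every graph in play lies in $T_{n,m}$ (same $m$), both claimed inequalities become strict inequalities between the corresponding $D$-values, namely $D(G_{n,m})<D(H_{n,m})$ in (i) and $D(G_{n,m})<D(A_{n,m-2n+3})$ in (ii). Since isolated vertices carry no edges and do not affect $s$--$t$ connectivity, $D(G_{n,m})=D(A_{n',r'})$, so the whole problem reduces to understanding $D(A_{p,q})$ for two-terminal graphs of the form $A_{p,q}$ sharing the common edge count $m=2p-3+q$.

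Second, I would obtain a closed form for $D(A_{p,q})$. In any separating subset the edge $st$ must be absent, which is forced. The internal vertices adjacent only to $s$ and $t$ (there are $p-q-3$ of them) are pairwise nonadjacent and not adjacent to the star at $v_3$, so each independently contributes the three states other than ``both incident edges present'', giving a factor $3^{\,p-q-3}$. The remaining \emph{star gadget} consists of the center $v_3$ together with its $q$ neighbours $v_4,\dots,v_{3+q}$, each adjacent to $s$, $t$ and $v_3$; I would count its separating configurations $g(q)$ by a transfer over the partition type of $\{s,t,v_3\}$, with the three admissible states $\{s\}\{t\}\{v_3\}$, $\{s,v_3\}\{t\}$ and $\{t,v_3\}\{s\}$ (the fourth, $s\sim t$, being forbidden). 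A short case check on the three edges of one leaf gives the transitions $4,1,1$ out of the all-separate state and a single self-loop of weight $5$ out of each of the other two, whence $g(q)=4\cdot 5^{q}-4^{q}$ and
\[
D(A_{p,q})=3^{\,p-q-3}\bigl(4\cdot 5^{q}-4^{q}\bigr).
\]

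Third, with $q=m+3-2p$ this becomes $D(p)=3^{\,3p-m-6}\bigl(4\cdot 5^{\,m+3-2p}-4^{\,m+3-2p}\bigr)$, a one-variable quantity over the admissible window $m/3+2\le p\le (m+3)/2$. The graph $H_{n,m}$ sits at the top end ($q\in\{0,1\}$), where $D=\Theta(3^{m/2})$, while the core of $G_{n,m}$ realizes the smallest admissible number of core vertices (equivalently the largest number of chords at $v_3$), i.e. the bottom end $p=p_0$, where $D=\Theta(5^{m/3})$; the competitor $A_{n,m-2n+3}$ of (ii) sits at the interior value $p=n$. For (i) the inequality $D(G_{n,m})<D(H_{n,m})$ therefore reduces, after inserting the exact prefactors, to the numerical fact $5^{1/3}<3^{1/2}$ (equivalently $25<27$); the hypothesis $m\ge 20$ is exactly the threshold at which the ceiling- and parity-dependent constants stop overturning this (indeed one checks the inequality already fails around $m=16$). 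For (ii) I would instead use the step ratio $D(p+1)/D(p)=27\,(4\cdot5^{\,q-2}-4^{\,q-2})/(4\cdot5^{q}-4^{q})$, which exceeds $1$ precisely when $q\ge 4$; this makes $D$ increasing from $p=p_0$ up to $p=n$, with at most a single near-unit dip when $m=2n-2$ that is absorbed by the remaining factors, yielding $D(A_{n',r'})<D(A_{n,m-2n+3})$.

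The conceptual content is the closed form for $D(A_{p,q})$; once it is in hand both parts are merely comparisons of the mixed exponentials $5^{m/3}$ and $3^{m/2}$. The main obstacle is therefore not the idea but the bookkeeping: certifying the strict inequalities uniformly down to the boundary $m=20$ while carrying the exact constants coming from the three residues of $m$ modulo $3$ (which fix the prefactor $3^{\,3p_0-m-6}$ and the value $q_0$) and from the parity of $m$ (which selects $A_{(m+2)/2,1}$ or $A_{(m+3)/2,0}$ as the core of $H_{n,m}$), together with the finitely many small-$q$ dips in the step ratio used for (ii). I expect the parity/residue split at the single boundary value $m=20$, rather than any structural difficulty, to be where the real work lies.
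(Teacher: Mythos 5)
The paper never proves this lemma: it is imported verbatim from Bertrand et al.~\cite{2018Bertrand} and only its conclusion is used, so there is no in-paper argument to compare against; I therefore assess your proposal on its own. Its core is correct. At $\rho=1/2$ the comparison does reduce to counting separating edge subsets, and your closed form $D(A_{p,q})=3^{p-q-3}\bigl(4\cdot 5^{q}-4^{q}\bigr)$ checks out: the transfer with transitions $4,1,1$ from the all-separate state and self-loops of weight $5$, seeded by the vector $(1,1,1)$ coming from the two edges $sv_3,v_3t$, gives $2\cdot 5^{q}-4^{q}+2\cdot 5^{q}$, and the sanity checks $D(A_{p,0})=3^{p-2}$, $D(A_{p,1})=16\cdot 3^{p-4}$ agree. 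The step-ratio criterion $27f(q-2)/f(q)>1\iff (5/4)^{q-2}>11/8\iff q\ge 4$ is also right, and the boundary values work out ($D(G_{n,20})=34428<34992=D(H_{n,20})$, and the single sub-unit step at $q=1$ in part (ii) is compensated because $n\ge 11$ forces at least two steps between $p_0$ and $n$). Two caveats. First, the ``bookkeeping'' you defer is genuinely all that remains but is not yet a proof: for (i) you must check the residues of $m$ modulo $6$ at $m=20,\dots,25$ and then argue monotonicity of $D(H_{n,m})/D(G_{n,m})$ within each residue class; that monotonicity holds because $D(H)$ gains a factor $27$ per increment of $6$ in $m$ while $D(G)$ gains $f(q_0+2)/f(q_0)$, which lies in $(25,27)$ only once $q_0\ge 2$ (it equals $28$ at $q_0=0$ and $27.25$ at $q_0=1$), so you need the easy bound $q_0\ge 5$ valid for $m\ge 20$. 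Second, the statement as printed defines $n'=\lceil n/3\rceil+2$, under which $r'=m-2n'+3$ exceeds $n'-3$ for most of the stated range and $A_{n',r'}$ is undefined (e.g.\ $n=11$, $m=20$ yields $A_{6,11}$); your reading, that the core of $G_{n,m}$ sits at the smallest admissible number of core vertices, i.e.\ $n'=\lceil m/3\rceil+2$, is the one under which the lemma is true and consistent with the source, but that correction should be stated explicitly rather than assumed silently.
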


The nonexistence of UMRTTGs in $T_{n,m}$ for each of the pairs $n$ and $m$ stated in Lemma~\ref{lemma:half} follows from Remark~\ref{remark:uniqueness}. The authors summarize their findings in Theorem~\ref{theorem:bertrand}.
\begin{theorem}[Bertrand et al.~\cite{2018Bertrand}]\label{theorem:bertrand}
For each pair of integers $n$ and $m$ such that $n\geq 11$ and $20 \leq m \leq 3n-9$ there is no UMRTTG in $T_{n,m}$.
\end{theorem}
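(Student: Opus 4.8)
The plan is to assemble the theorem from the two preceding lemmas through the dichotomy recorded in Remark~\ref{remark:uniqueness}, using the guiding observation that every UMRTTG is in particular an LMRTTG near $1$. Since we are in the unconstrained regime $d\geq n-1$ (so the prefix and superscript $d$ are dropped), any UMRTTG in $T_{n,m}$ must coincide with the unique element of $T_{n,m}^{d}(m)$, i.e.\ with the single LMRTTG near $1$ guaranteed by Lemma~\ref{lemma:existenceLMRTTG} and identified explicitly in Lemma~\ref{lemma:LMRTTG1}. Hence it suffices to name that unique candidate and then exhibit a competitor that strictly beats it somewhere on $[0,1]$, which by Remark~\ref{remark:uniqueness} rules out the existence of any UMRTTG.

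First I would split the range $20\leq m\leq 3n-9$ into the two subranges covered by the available lemmas: the sparse regime $20\leq m\leq 2n-3$ and the denser regime $2n-3<m\leq 3n-9$. Before proceeding I would check the bookkeeping: the hypotheses $n\geq 11$ and $m\geq 20$ satisfy the requirements $n\geq 5$, $m\geq 5$ of Lemma~\ref{lemma:LMRTTG1}, and since $3n-9<3n-6$, part (ii) of that lemma applies throughout the denser regime, so the two subranges are jointly exhausted.

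In the sparse regime, Lemma~\ref{lemma:LMRTTG1}(i) gives that $H_{n,m}$ is the \emph{only} LMRTTG near $1$; by the observation above the only possible UMRTTG is therefore $H_{n,m}$. But Lemma~\ref{lemma:half}\ref{half1} provides $G_{n,m}=A_{n',r'}\cup\overline{K_{n-n'}}$ with $R_{G_{n,m}}(1/2)>R_{H_{n,m}}(1/2)$, so $H_{n,m}$ fails to dominate at $\rho_0=1/2$ and hence is not a UMRTTG; Remark~\ref{remark:uniqueness} then yields that no UMRTTG exists in $T_{n,m}$. In the denser regime the argument is verbatim the same, with $A_{n,m-2n+3}$ (the unique LMRTTG near $1$ from Lemma~\ref{lemma:LMRTTG1}(ii)) playing the role of $H_{n,m}$ and with Lemma~\ref{lemma:half}\ref{half2} supplying the strict inequality $R_{G_{n,m}}(1/2)>R_{A_{n,m-2n+3}}(1/2)$ at $\rho_0=1/2$. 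Combining the two regimes completes the proof for every admissible pair $(n,m)$.

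As for the main obstacle: within the theorem itself there is essentially none, as it is a clean logical assembly of Lemmas~\ref{lemma:LMRTTG1} and~\ref{lemma:half} via Remark~\ref{remark:uniqueness}. All the genuine difficulty is quarantined in Lemma~\ref{lemma:half}, where one must evaluate (or tightly bound) the path-count polynomials of the candidate LMRTTG and of $G_{n,m}$ at $\rho=1/2$ and verify the strict inequality uniformly over the admissible ranges of $n$ and $m$. That reliability comparison at a single interior point is the real work, but it is assumed here as an already-established result.
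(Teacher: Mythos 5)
Your proposal is correct and follows exactly the route the paper takes: it identifies the unique LMRTTG near $1$ via Lemma~\ref{lemma:LMRTTG1} (splitting into the ranges $m\leq 2n-3$ and $2n-3<m\leq 3n-9$), invokes Lemma~\ref{lemma:half} to exhibit $G_{n,m}$ beating that candidate at $\rho=1/2$, and concludes via Remark~\ref{remark:uniqueness}. Your closing observation that the real work is quarantined in Lemma~\ref{lemma:half} is also accurate.
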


An elegant result on the nonexistence of UMRTTGs was given by Xie et al.~\cite{2021Xie}. In the following, for each $G$ in $T_{n,m}$ we let $\hat{G}$ be the simple graph $G-s-t$. 
\begin{theorem}[Xie et al.~\cite{2021Xie}]\label{theorem:Xie}
Let $n$ and $m$ be integers such that $n\geq 11$ and $3n-6<m\leq \binom{n}{2}-2$. Let $G$ be any two-terminal graph in $T_{n,m}$. The following assertions hold.
\begin{enumerate}[label=(\roman*)]
\item\label{teo-regular} If $G$ is a LMRTTG near $0$ then the terminals in $G$ are universal and $\hat{G}$ is almost-regular. 
\item\label{teo-not-regular} If $G$ is a LMRTTG near $1$ then the terminals in $G$ are universal and $\hat{G}$ is not almost-regular.
\end{enumerate}
In particular, there is no UMRTTG in $T_{n,m}$ when $n\geq 11$ and $3n-6<m\leq \binom{n}{2}-2$.
\end{theorem}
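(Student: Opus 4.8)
The plan is to establish the two necessary conditions (i) and (ii) and then read off the nonexistence claim, since a UMRTTG is simultaneously an LMRTTG near $0$ and near $1$ while $\hat G$ cannot be both almost-regular and not almost-regular. Both (i) and (ii) begin with the same reduction: an extremal graph must have universal terminals. For an LMRTTG near $1$ I would read this off the first coefficients: $N_1(G)=1$ forces $st\in E(G)$, and then $N_2(G)=(m-1)+|N_G(s)\cap N_G(t)\setminus\{s,t\}|$ is largest exactly when $s$ and $t$ are adjacent to all remaining $n-2$ vertices, i.e.\ both universal (a universal-terminal graph lies in $T_{n,m}$ because $2n-3<m\le\binom{n}{2}$). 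For an LMRTTG near $0$ I would instead note that $U_G(\rho)\sim c\,\rho^{\lambda}$ as $\rho\to 0$, where $\lambda$ is the size of a minimum $s$-$t$ edge cut; maximizing reliability near $0$ first maximizes $\lambda$, and since $\lambda\le\min(\deg_G s,\deg_G t)\le n-1$ with equality only when both terminals are universal, universality follows again. After this step the graph is determined by $\hat G$, which has $n-2$ vertices and $e(\hat G)=m-2n+3$ edges, with $0<e(\hat G)<\binom{n-2}{2}$ by the hypotheses on $m$.

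For (ii) I would compute the low-order coefficients of a universal-terminal graph by sorting its $i$-edge pathsets by the length of their shortest $s$-$t$ path. A direct count shows $N_1,N_2,N_3$ depend only on $n$ and $m$: the contributions of edge sets containing $st$, containing a common neighbour, or forming a length-$3$ path are all fixed by $n$, $m$, and $e(\hat G)$. The first coefficient that feels the structure of $\hat G$ is $N_4$: the $4$-edge pathsets whose shortest path has length $4$ are precisely the length-$4$ paths $s\,u\,v\,w\,t$ with $uv,vw\in E(\hat G)$, and there are $2\sum_{v}\binom{\deg_{\hat G}(v)}{2}=2p_3(\hat G)$ of them, while the remaining contributions to $N_4$ are again fixed by $n$ and $m$. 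Thus $N_4(G)=c(n,m)+2p_3(\hat G)$, so an LMRTTG near $1$ must maximize $p_3(\hat G)=\sum_v\binom{\deg_{\hat G}(v)}{2}$. Since this is a strictly convex function of the degree sequence with fixed sum $2e(\hat G)$, it is \emph{minimized}, never maximized, by an almost-regular $\hat G$; as $0<e(\hat G)<\binom{n-2}{2}$ guarantees a strictly more unbalanced realizable degree sequence, every maximizer fails to be almost-regular, which is (ii).

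For (i) I would exploit that, with universal terminals, every $s$-$t$ cut consists of the edge $st$, one terminal edge for each non-terminal vertex, and the edges of $\hat G$ crossing the induced bipartition; hence a cut of size $(n-1)+j$ corresponds to a bipartition of $V(\hat G)$ with exactly $j$ crossing edges, taken with a factor $2$ for the side on which each terminal sits. The only cuts of minimum size $n-1$ are the two trivial ones isolating $s$ or $t$ (so $\hat G$ must be connected, else there are more minimum cuts), and the cutsets that merely contain a trivial minimum cut contribute to the higher coefficients in a manner depending only on $n$ and $m$. Therefore the first structure-dependent unreliability coefficient appears at cut size $(n-1)+\lambda(\hat G)$ with weight proportional to the number of minimum edge-cuts of $\hat G$, so an LMRTTG near $0$ maximizes the edge-connectivity $\lambda(\hat G)$ and then minimizes the number of minimum edge-cuts. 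I would conclude with an exchange argument: if $\hat G$ is not almost-regular, pick a vertex of degree at least $\delta(\hat G)+2$ and a vertex $y$ of minimum degree, delete an edge at the former whose other endpoint is not adjacent to $y$ (the degree gap guarantees one exists), and reattach it to $y$; this does not decrease the minimum degree and strictly decreases the number of minimum-degree vertices, hence the number of smallest cuts, so $\hat G$ was not extremal. This gives (i).

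The step I expect to be the real obstacle is the last part of (i): upgrading ``$\hat G$ maximizes $\lambda(\hat G)$ and minimizes the number of minimum cuts'' to ``$\hat G$ is almost-regular'' uniformly across the whole range $3n-6<m\le\binom{n}{2}-2$. The subtlety is that when $\lambda(\hat G)<\delta(\hat G)$ the minimum cuts of $\hat G$ need not be single-vertex isolations, so the exchange must be arranged to control the number of smallest cuts directly rather than only the minimum degree; this extremal-graph lemma is where the difficulty lies, whereas the coefficient computations and the universality reduction are routine. Granting (i) and (ii), the nonexistence of a UMRTTG is immediate: such a graph would be an LMRTTG near $0$ and near $1$ simultaneously, forcing $\hat G$ to be at once almost-regular and not almost-regular, which is impossible; hence no UMRTTG exists in $T_{n,m}$ in the stated ranges.
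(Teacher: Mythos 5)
Your overall architecture coincides with the paper's (which only sketches Xie et al.'s argument rather than reproving it): universality of the terminals from the extreme coefficients, the identity $N_4(G)=c(n,m)+2p_3(\hat{G})$ for part (ii), the cut-size analysis for part (i), and the final contradiction for nonexistence. The two places where you depart from the paper are exactly the two places where your write-up has gaps. For (ii), the paper does not argue by convexity: it invokes Byer's classification of all graphs maximizing $p_3$ into six explicit families and reports that none of them is almost-regular. Your majorization route can be made to work, but the justification you give --- that ``$0<e(\hat{G})<\binom{n-2}{2}$ guarantees a strictly more unbalanced realizable degree sequence'' --- is not a proof and is even false at the low end (for $e(\hat{G})=1$ the unique graph is almost-regular and is trivially the maximizer). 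What saves you is that $m>3n-6$ forces $e(\hat{G})=m-2n+3\geq n-2$, and in the range $n-2\leq e(\hat{G})\leq\binom{n-2}{2}-2$ one can exhibit, for every almost-regular graph, an edge exchange whose new degree sequence strictly majorizes the old one; strict convexity of $x\mapsto\binom{x}{2}$ then strictly increases $p_3$. That construction (a short case analysis on where the deleted edge and the added non-edge can be placed) must actually be supplied; it is precisely the work that Byer's theorem does in the paper's version.

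For (i) the gap is more serious, and you flag it yourself. The paper's route is: an LMRTTG near $0$ must lie in $T_{n,m}^u$ and maximize $\lambda(\hat{G})$, and a graph of maximum edge-connectivity is almost-regular (citing Harary). Your refinement --- first maximize $\lambda(\hat{G})$, then minimize the number of minimum edge-cuts of $\hat{G}$ --- is a legitimate reading of the first structure-dependent coefficient, but your closing exchange argument does not close it: when $\lambda(\hat{G})<\delta(\hat{G})$ the minimum cuts are not vertex-isolating, so decreasing the number of minimum-degree vertices controls neither the number of minimum cuts nor $\lambda(\hat{G})$ itself, and even when $\lambda(\hat{G})=\delta(\hat{G})$ the reattachment can create new minimum cuts elsewhere. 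As written, part (i) is not proved. (Your instinct that more is needed here is sound: maximizing $\lambda(\hat{G})$ only yields $\delta(\hat{G})\geq\lfloor 2e(\hat{G})/(n-2)\rfloor$, which by itself does not force the degrees to differ by at most one when the degree surplus exceeds $1$; so either the precise extremal statement behind the Harary citation or an argument of the kind you are attempting is genuinely required, and neither is present in your proposal.)
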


To close this section, we will briefly explain the methods employed by Xie et al. to prove Theorem~\ref{theorem:Xie}.
Let $n$ and $m$ be integers such that $n\geq 11$ and $3n-6< m \leq \binom{n}{2}-2$. Denote $T_{n,m}^{u}$ the collection of two-terminal graphs in $T_{n,m}$ such that the terminals $s$ and $t$ are universal. 

On the one hand, the authors proved that if $G$ is a LMRTTG near $0$ then $G$ is in $T_{n,m}^u$ and the graph $\hat{G}$ has the greatest edge-connectivity. It is known~\cite{1962Harary} that if a graph has the greatest edge-connectivity then it is almost-regular, which gives Theorem~\ref{theorem:Xie}\ref{teo-regular}.  

On the other hand, the authors used the construction given in Lemma~\ref{lemma:existenceLMRTTG} to prove that $T_{n,m}(3)$ equals the set $T_{n,m}^u$. 
Then, the authors found the following expression for $N_4(G)$ for each $G$ in $T_{n,m}^{u}$:
\begin{equation*}
N_4(G)= \binom{m-1}{3} + (n-2)\binom{m-3}{2}-\binom{n-2}{2}+2(m-2n+3)(m-6)+2p_3(\hat{G}). 
\end{equation*}
Byer~\cite{1999Byer} proved that each graph $\hat{G}$ maximizing $p_3(\hat{G})$ belongs to at least one of the six classes $\mathcal{C}^1_{n,m},\ldots,\mathcal{C}^6_{n,m}$ (the interested reader can find the definition of these six classes in~\cite{1999Byer}). Later, the authors of~\cite{2021Xie} checked, for each $i\in \{1,2,\ldots,6\}$, that each graph in $\mathcal{C}^i_{n,m}$ is not almost-regular. Consequently, the set $T_{n,m}(4)$ consists of some two-terminal graphs $G$ in $T_{n,m}^u$ such that $\hat{G}$ is not almost-regular. As $T_{n,m}(m)$ is included in the set $T_{n,m}(4)$, 
Theorem~\ref{theorem:Xie}\ref{teo-not-regular} then follows by Lemma~\ref{lemma:existenceLMRTTG}. A recent breakthrough in the study of LMRTTGs near $1$ was given by Gong and Lin~\cite{2024Gong}. They achieved in characterizing all LMRTTGs near $1$ in a large number of nonempty classes $T_{n,m}$ such that $n\geq 6$ and $2n-3\leq m\leq \binom{n}{2}$. 

We remark that Bertrand et al.~\cite{2018Bertrand} as well as Xie et al.~\cite{2021Xie} also proved the nonexistence of UMRTTGs in some nonempty sets $T_{n,m}$ such that $n\leq 10$, but these cases can be obtained by computation means.

\section{Existence of 1-UMRTTGs and 2-UMRTTGs}\label{section:1and2}
Here we will characterize all $1$-UMRTTGs and $2$-UMRTTGs. A key concept is that of irrelevant edges. 
\begin{definition}\label{definition:irrelevance}
Let $G \in T_{n,m}$ and let $d$ be a positive integer. 
An edge $e$ in $G$ is \emph{$d$-irrelevant} if for each $d$-pathset $H$ in $G$ 
including the edge $e$ it holds that $H-e$ is also a $d$-pathset in $G$. 
\end{definition}

Lemma~\ref{lemma:irrelevant} follows from Definition~\ref{definition:irrelevance} and 
Equation~\eqref{equation:poly}. 
\begin{lemma}\label{lemma:irrelevant}
Let $G\in T_{n,m}$ and let $d$ be a positive integer. 
If an edge $e$ in $G$ is $d$-irrelevant then $R_{G-e}^d(\rho)=R_G^d(\rho)$ for every $\rho\in [0,1]$. 
\end{lemma}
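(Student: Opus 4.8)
The plan is to reduce the statement to a single combinatorial identity relating the edge-counts $N_i^d$ of $G$ and of $G-e$, and then substitute into equation~\eqref{equation:poly}. Concretely, I would first prove that for every $i$,
\begin{equation*}
N_i^d(G)=N_i^d(G-e)+N_{i-1}^d(G-e),
\end{equation*}
adopting the conventions $N_0^d(G-e)=0$ and $N_m^d(G-e)=0$ (the latter because $G-e$ has only $m-1$ edges).

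To establish this identity, I would partition the $d$-pathsets of $G$ having exactly $i$ edges according to whether or not they contain $e$. A $d$-pathset not containing $e$ uses only edges of $G-e$, and the property of being a $d$-pathset depends solely on the subgraph itself; hence these are exactly the $d$-pathsets of $G-e$ with $i$ edges, contributing $N_i^d(G-e)$. For the $d$-pathsets containing $e$, I would set up the map $H\mapsto H-e$. This map lands in the set of $d$-pathsets of $G-e$ with $i-1$ edges: by Definition~\ref{definition:irrelevance}, the $d$-irrelevance of $e$ guarantees that $H-e$ is again a $d$-pathset, and $H-e$ contains no copy of $e$. The map is injective (two subgraphs containing $e$ that agree after deleting $e$ are equal) and surjective (given a $d$-pathset $K$ of $G-e$ with $i-1$ edges, the subgraph $K+e$ is a $d$-pathset of $G$, since adjoining an edge cannot destroy an already existing short terminal path, it contains $e$, and $(K+e)-e=K$). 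Thus $H\mapsto H-e$ is a bijection, so these pathsets contribute $N_{i-1}^d(G-e)$, which proves the identity.

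With the identity in hand, I would substitute it into equation~\eqref{equation:poly} and split the resulting sum into two parts. Re-indexing the second part by $j=i-1$ and invoking the two boundary conventions, the first part collapses to $\rho\,R_{G-e}^d(\rho)$ and the second to $(1-\rho)\,R_{G-e}^d(\rho)$; adding them yields $R_G^d(\rho)=\bigl(\rho+(1-\rho)\bigr)R_{G-e}^d(\rho)=R_{G-e}^d(\rho)$, as desired.

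The only genuinely nontrivial step is the bijection, and within it the well-definedness of $H\mapsto H-e$: this is exactly where the hypothesis of $d$-irrelevance is consumed, whereas surjectivity relies only on the elementary monotonicity fact that every supergraph of a $d$-pathset is itself a $d$-pathset. Everything else is bookkeeping. I would also note that the same conclusion admits a one-line probabilistic reading: conditioning on the state of $e$, the $d$-irrelevance of $e$ makes the event \emph{``the surviving subgraph is a $d$-pathset''} equivalent to the event \emph{``the surviving subgraph with $e$ removed is a $d$-pathset''}, and the latter is independent of whether $e$ survives, so the success probability equals $R_{G-e}^d(\rho)$.
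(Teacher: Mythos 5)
Your proposal is correct and follows essentially the same route as the paper: both establish the identity $N_i^d(G)=N_i^d(G-e)+N_{i-1}^d(G-e)$ by splitting the $d$-pathsets according to whether they contain $e$ (with $d$-irrelevance guaranteeing that deleting $e$ preserves the pathset property), and then substitute into equation~\eqref{equation:poly} and re-index to obtain $R_G^d(\rho)=\bigl(\rho+(1-\rho)\bigr)R_{G-e}^d(\rho)$. Your version is slightly more explicit about the bijection (injectivity and surjectivity of $H\mapsto H-e$) and about the boundary conventions, which the paper handles by the informal ``sum-rule,'' but the substance is identical.
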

\begin{proof}
Let $G$ be any two-terminal graph in $T_{n,m}$. As $e$ is $d$-irrelevant in $G$, $e$ is not included in any path of length at most $d$ joining the terminals in $G$. Fix $i\in \{1,2,\ldots,m-1\}$. Let $H$ be any $d$-pathset in $G$ composed by $i$ edges.  
If $e$ is in $H$ then, by our assumption, $H-e$ is still a $d$-pathset in $G-e$ with $i-1$ edges. If $e$ is not in $H$ then $H$ is already a $d$-pathset in $G-e$. By the sum-rule we get that 
$N_i^d(G)=N_{i-1}^d(G-e)+N_i(G-e)$. Additionally, $N_{m}^d(G)=N_{m-1}^d(G-e)$. Replacing into equation~\eqref{equation:poly} and letting $N_0^d(G-e)=0$ gives that, for every $\rho$ in $[0,1]$,
\begin{align*}
R_{G}^d(\rho) &= \sum_{i=1}^{m}N_i^d(G)(1-\rho)^i\rho^{m-i}   
    = \sum_{i=1}^{m}N_{i-1}^d(G-e)(1-\rho)^i\rho^{m-i} + 
      \sum_{i=1}^{m-1}N_i^d(G-e)(1-\rho)^i\rho^{m-i}\\
    &= (1-\rho)\sum_{i=1}^{m}N_{i-1}^d(G-e)(1-\rho)^{i-1}\rho^{(m-1)-(i-1)} + \rho \sum_{i=1}^{m-1}N_i^d(G-e)(1-\rho)^{i}\rho^{m-1-i}\\
     &= (1-\rho)R^d_{G-e}(\rho) + \rho R_{G-e}^d(\rho)=R^d_{G-e}(\rho). \qedhere
\end{align*} 
\end{proof}

A characterization of all $1$-UMRTTGs is a consequence of Lemma~\ref{lemma:irrelevant}. 

\begin{corollary}\label{cor:d1}
Let $n$ and $m$ be integers such that $n\geq 2$ and $1\leq m \leq \binom{n}{2}$. 
A two-terminal graph $G$ in $T_{n,m}$ is a $1$-UMRTTG if and only if $st$ is in $G$.    
\end{corollary}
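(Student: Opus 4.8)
The plan is to reduce everything to the single observation that, for $d=1$, a spanning subgraph is a $1$-pathset precisely when it contains the edge $st$: since $s$ and $t$ are distinct, the only path of length at most $1$ joining them is that very edge. This turns the whole question into one about the survival of a single edge, after which the irrelevance machinery of Section~\ref{section:1and2} does all the work.

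First I would verify that every edge $e\neq st$ of $G$ is $1$-irrelevant in the sense of Definition~\ref{definition:irrelevance}. If $H$ is any $1$-pathset containing $e$, then $H$ contains $st$ by the observation above, and since $e\neq st$ the subgraph $H-e$ still contains $st$ and is therefore still a $1$-pathset. By Lemma~\ref{lemma:irrelevant}, deleting any such edge leaves $R_G^1$ unchanged. Iterating this deletion, if $st\in E(G)$ then $R_G^1(\rho)$ equals the reliability of the two-terminal graph whose only edge is $st$ (together with $n-2$ isolated vertices), which is simply the survival probability $1-\rho$ of that edge.

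For the forward implication I would then note that $1-\rho$ is the largest value any $R_H^1(\rho)$ can attain: a $1$-pathset of $H$ requires $st$ to be present in $H$ and to survive the random deletion, an event of probability at most $1-\rho$, so $R_H^1(\rho)\leq 1-\rho$ for every $H\in T_{n,m}$. Hence $st\in E(G)$ forces $R_G^1(\rho)=1-\rho\geq R_H^1(\rho)$ for all $H$ and all $\rho\in[0,1]$, making $G$ a $1$-UMRTTG. For the converse, if $st\notin E(G)$ then no spanning subgraph of $G$ contains $st$, so $R_G^1\equiv 0$; and since $m\geq 1$ there is a competitor $H\in T_{n,m}$ containing $st$ with $R_H^1(\rho)=1-\rho>0$ on $[0,1)$, whence $G$ fails the $1$-UMRTTG inequality.

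There is no genuine obstacle in this argument: the corollary is essentially a direct consequence of the irrelevance lemma applied to the degenerate case $d=1$. The only step requiring a moment's care is confirming that a competitor graph $H$ with $st\in E(H)$ really does live in $T_{n,m}$; this is immediate from the hypothesis $m\geq 1$, since one may place $st$ first and distribute the remaining $m-1\leq \binom{n}{2}-1$ edges arbitrarily among the other vertex pairs.
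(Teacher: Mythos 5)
Your proposal is correct and follows essentially the same route as the paper's own proof: both reduce the problem via Lemma~\ref{lemma:irrelevant} by observing that every edge other than $st$ is $1$-irrelevant, and then conclude that $R_G^1$ is identically $0$ when $st\notin E(G)$ and equals $1-\rho$ (the maximum possible) when $st\in E(G)$. Your added check that a competitor containing $st$ actually exists in $T_{n,m}$ is a small detail the paper leaves implicit, but it does not change the argument.
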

\begin{proof}
Let $G$ be any two-terminal graph in $T_{n,m}$. 
Any edge $e$ in $G$ that is not $st$ is $1$-irrelevant thus we can delete $e$ and the $1$-constrained two-terminal reliability is preserved. On the one hand, if $st$ is not in $G$ then we can remove all edges in $G$ and $R_G^d(\rho)=0$ for every $\rho$ in $[0,1]$. On the other hand, if $st$ is in $G$ 
then $R_G^d(\rho)$ equals the probability that $st$ does not fail, which is $1-\rho$.
\end{proof}

Lemma~\ref{lemma:st} roughly states that $st$ is the most relevant edge regardless of the distance constraint.
\begin{lemma}[Presence of the edge $st$]\label{lemma:st}
For each $G$ in $T_{n,m}$ such that $st\notin G$, each edge $e$ in $G$, and each positive integer $d$, the two-terminal graph $G-e+st$ is $d$-stronger than $G$.
\end{lemma}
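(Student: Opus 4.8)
The plan is to compare the edge-counted $d$-pathsets of $G'=G-e+st$ and of $G$ directly, exploiting the fact that the edge $st$ is by itself a one-edge path between the terminals. First I would record that $G'$ genuinely lies in $T_{n,m}$: since $st\notin E(G)$ while $e\in E(G)$ (so $e\neq st$), deleting $e$ and inserting $st$ keeps the graph simple and preserves the vertex count $n$, the edge count $m$, and the two terminals. Writing $F=E(G)\setminus\{e\}=E(G')\setminus\{st\}$ for the edges common to both graphs, I would then classify every spanning subgraph according to whether it contains the distinguished edge, namely $e$ in the case of $G$ and $st$ in the case of $G'$.

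The key structural observation is that the property of being a $d$-pathset depends only on the edge set of a spanning subgraph and on the fixed terminal labels, never on the ambient graph. Consequently the $d$-pathsets of $G$ whose edges lie in $F$ are literally the same spanning subgraphs as the $d$-pathsets of $G'$ whose edges lie in $F$; I denote by $A_i$ their number with exactly $i$ edges, a quantity shared by both graphs. A $d$-pathset of $G$ that contains $e$ has the form $S\cup\{e\}$ with $S\subseteq F$ and $|S|=i-1$, and it is a $d$-pathset only for certain $S$; call $B_i$ the number of such $S$. A $d$-pathset of $G'$ that contains $st$ has the form $S\cup\{st\}$ with $S\subseteq F$ and $|S|=i-1$, but here \emph{every} such $S$ qualifies, because $st$ alone is a path of length $1\le d$ joining the terminals. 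Hence the corresponding count is $\binom{|F|}{i-1}\ge B_i$, which immediately gives $N_i^d(G')=A_i+\binom{|F|}{i-1}\ge A_i+B_i=N_i^d(G)$ for every $i$.

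It then remains to exhibit one strict inequality, and I would simply inspect $i=1$. Since $G$ is simple and $st\notin E(G)$, no single edge of $G$ joins the terminals, so $N_1^d(G)=0$; in $G'$, by contrast, the lone edge $st$ is a $d$-pathset, so $N_1^d(G')=1$. Combining $N_1^d(G')>N_1^d(G)$ with the inequalities $N_i^d(G')\ge N_i^d(G)$ established above shows that $G'$ is $d$-stronger than $G$, as required.

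This is essentially a counting argument and I expect no serious obstacle. The one point that genuinely carries the proof, and hence deserves the most care, is the trivial-looking claim that adjoining $st$ to an \emph{arbitrary} $S\subseteq F$ always produces a $d$-pathset via the length-$1$ path $st$. That is precisely what replaces the restricted count $B_i$ by the full binomial coefficient $\binom{|F|}{i-1}$, and what forces the strict gain at $i=1$; everything else is bookkeeping against the partition induced by $F$.
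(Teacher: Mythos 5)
Your proof is correct and follows essentially the same route as the paper's: both split the $d$-pathsets according to whether they contain the distinguished edge ($e$ in $G$, $st$ in $G-e+st$) and exploit the fact that any spanning subgraph containing $st$ is automatically a $d$-pathset, with the strict gain coming from $N_1^d(G-e+st)=1>0=N_1^d(G)$. The only cosmetic difference is that you count exactly ($N_i^d(G-e+st)=A_i+\binom{m-1}{i-1}$) where the paper exhibits an injection.
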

\begin{proof}
Let $G$ be in $T_{n,m}$ such that $st\notin G$, and let $e$ be an edge in $G$. Define $H$ as $G-e+st$. On the one hand, each $d$-pathset in $G$ not including $e$ is a $d$-pathset in $H$. On the other hand, for each $d$-pathset $G'$ in $G$ including $e$ we can assign the $d$-pathset $H'$ in $H$ given by $G'-e+st$. Consequently, for each $i\in \{2,3,\ldots,m\}$ 
it holds that $N_i^d(H)\geq N_i^d(G)$. As $N_1^d(H)=1$ and $N_1^d(G)=0$, the lemma follows.
\end{proof}

We are in position to characterize all $2$-UMRTTGs.
\begin{proposition}\label{proposition:2UMRTTG}
Let $n$ and $m$ be integers such that $n\geq 2$ and $1\leq m \leq \binom{n}{2}$. A two-terminal graph $G$ in $T_{n,m}$ is a $2$-UMRTTG if and only if $G$ includes $A_{n',0}$ as a subgraph, 
where $n' = \min\{\lfloor \frac{m+3}{2}\rfloor,n\}$.
\end{proposition}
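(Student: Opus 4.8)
The plan is to show that $R_G^2(\rho)$ depends on only one structural parameter of $G$ and then to optimize that parameter. First I would identify the relevant edges. Any path of length at most $2$ joining the terminals is either the edge $st$ or a ``cherry'' $s\,v\,t$ through a common neighbour $v\in N_G(s)\cap N_G(t)$; hence, by Definition~\ref{definition:irrelevance}, every edge other than $st$ and the cherry edges $sv,vt$ with $v\in N_G(s)\cap N_G(t)$ is $2$-irrelevant. Put $w=|N_G(s)\cap N_G(t)|$ and let $\varepsilon=1$ if $st\in E(G)$ and $\varepsilon=0$ otherwise. The edge $st$ and the $w$ cherries are pairwise edge-disjoint, and since each edge is deleted independently with probability $\rho$, the terminals fail to be joined within distance $2$ precisely when $st$ is absent or deleted and each cherry has at least one deleted edge. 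By independence this gives the closed form
\begin{equation*}
U_G^2(\rho)=\rho^{\varepsilon}\bigl(1-(1-\rho)^2\bigr)^{w}=\rho^{\varepsilon}\bigl(\rho(2-\rho)\bigr)^{w},
\end{equation*}
which one may also read off from equation~\eqref{equation:poly} after discarding the $2$-irrelevant edges by Lemma~\ref{lemma:irrelevant}. Thus $R_G^2(\rho)=1-U_G^2(\rho)$ is determined by the pair $(\varepsilon,w)$ alone.

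Next I would optimise $(\varepsilon,w)$. Writing $q(\rho)=\rho(2-\rho)$, note $q(\rho)\in(0,1)$ for $\rho\in(0,1)$, so $U_G^2(\rho)$ strictly decreases as $w$ increases and as $\varepsilon$ passes from $0$ to $1$; hence we want both as large as possible. The cherries use distinct non-terminal vertices, giving $w\le n-2$, and when $st$ is present the $2w+1$ relevant edges must fit in $G$, giving $w\le\lfloor(m-1)/2\rfloor$. Therefore, among graphs containing $st$, the largest attainable value is $w_{\max}:=\min\{\lfloor(m-1)/2\rfloor,n-2\}$, and the floor identity $\lfloor(m+3)/2\rfloor-2=\lfloor(m-1)/2\rfloor$ shows that $n'-2=w_{\max}$, so $A_{n',0}$ has exactly $w_{\max}$ cherries and contains $st$. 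To rule out that dropping $st$ (which, when $m$ is even, would free one edge for an extra cherry) could do better, I would invoke Lemma~\ref{lemma:st}: any $H$ with $st\notin H$ is dominated by $H-e+st$, a graph containing $st$ whose reliability is therefore at most $1-\rho\,q(\rho)^{w_{\max}}$. Hence the global minimum of $U_H^2(\rho)$ over $T_{n,m}$ equals $\rho\,q(\rho)^{w_{\max}}$, attained exactly when $\varepsilon=1$ and $w=w_{\max}$.

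Finally I would translate this extremal condition into the stated one. A graph $G$ is a $2$-UMRTTG if and only if it attains the pointwise-minimum unreliability, that is, iff $st\in E(G)$ and $G$ has exactly $w_{\max}=n'-2$ common neighbours of $s$ and $t$; since $w\le w_{\max}$ always holds once $st\in E(G)$, this is equivalent to $G$ containing the edge $st$ together with $n'-2$ common neighbours, which is precisely the condition that $A_{n',0}$ is a subgraph of $G$. For the ``only if'' direction any deficiency ($st$ absent, or $w<w_{\max}$) makes $A_{n',0}$ padded up to $T_{n,m}$ strictly more reliable on $(0,1)$; for the ``if'' direction the displayed formula yields equality with the maximum. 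The conceptual crux is the closed-form reduction of the first paragraph, after which only a one-variable optimisation remains. The subtle point I expect to need care is the $st$-absent case when $m$ is even, where an extra cherry is affordable yet still loses because $q(\rho)>\rho$ on $(0,1)$; routing this through Lemma~\ref{lemma:st} avoids a parity case analysis. A last piece of bookkeeping is to exhibit some member of $T_{n,m}$ containing $A_{n',0}$, which is possible since $n'\le n$ and $2n'-3\le m$.
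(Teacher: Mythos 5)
Your proof is correct and follows essentially the same route as the paper: reduce to the relevant edges ($st$ plus the edge-disjoint cherries through common neighbours of the terminals) via $2$-irrelevance and Lemma~\ref{lemma:irrelevant}, obtain the product closed form $\rho^{\varepsilon}(1-(1-\rho)^2)^{w}$ for the unreliability, handle the $st$-absent case with Lemma~\ref{lemma:st}, and maximize the number of cherries subject to the edge and vertex budgets. Your uniform parametrization by $(\varepsilon,w)$ merely folds together the paper's two cases $m\geq 2n-3$ and $m<2n-3$ and makes explicit the even-$m$, no-$st$ comparison that the paper routes silently through Lemma~\ref{lemma:st}.
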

\begin{proof}
Let $n$ and $m$ be integers as in the statement. Let $G$ by any two-terminal graph in $T_{n,m}$. As the $2$-constrained two-terminal reliability is nondecreasing with the addition of edges, $R_{G}(\rho) \leq R_{K_n}^2(\rho)$ for every $\rho$ in $[0,1]$, where $K_n$ is the complete graph equipped with two terminals. Each edge in $K_n$ that is not a $1$-path neither a $2$-path is $2$-irrelevant. Then, by Lemma~\ref{lemma:irrelevant}, the $2$-constrained two-terminal reliability of $K_n$ equals that of $A_{n,0}$. If $m\geq 2n-3$ then, for every $\rho$ in $[0,1]$, it holds that 
$R_G(\rho)\leq R_{K_n}^2(\rho)=R_{A_{n,0}}^2(\rho)$, and the equality holds if and only if $G$ includes $A_{n,0}$ as a subgraph. 

Now, assume that $m<2n-3$. Let $n'=\lfloor \frac{m+3}{2} \rfloor$. 
Let $G$ be any two-terminal graph in $T_{n,m}$ that includes $A_{n'0}$ as a subgraph and let $H$ be any two-terminal graph in $T_{n,m}$. 
It is enough to prove that $U_{G}^2(\rho)\leq U_H^2(\rho)$ for every $\rho$ in $[0,1]$.  
By Lemma~\ref{lemma:st}, we can assume without loss of generality that $st$ is in $H$. As each edge in $G$ that is not a $1$-path neither a $2$-path is $2$-irrelevant, 
after the deletion of $2$-irrelevant edges in $G$ we get that 
$U_G^2(\rho)=U_{A_{n',0}}^2(\rho)$ and that $U_H^2(\rho)=U_{A_{\ell,0}}^2(\rho)$ for every $\rho$ in $[0,1]$ and for some nonnegative integer $\ell$. As $A_{\ell,0}$ has at most as many edges as $H$, 
we know that $2\ell-3\leq m$ thus $\ell\leq n'$. The failure probability of each $2$-path joining the terminals $s$ and $t$ equals $(1-(1-\rho)^2)$. Consequently, for every $\rho$ in $(0,1)$,
\begin{equation*}
U_G^2(\rho)=U_{A_{n',0}}^2(\rho) = \rho(1-(1-\rho)^2)^{n'}\leq \rho(1-(1-\rho)^2)^{\ell}= U_{A_{\ell,0}}^2(\rho) = U_H^2(\rho).\qedhere
\end{equation*}
\end{proof}

\section{Existence of 3-UMRTTGs}\label{section:3}
In this section we will prove that, for each pair of integers $n$ and $m$ such that $n\geq 6$ and $5\leq m \leq 2n-3$, the only 3-UMRTTG in $T_{n,m}$ 
is $H_{n,m}$. 

Let $T_{n,m}^*$ the set consisting of all two-terminal graphs in $T_{n,m}$ 
without $3$-irrelevant edges whose terminals are true twins. In a first step we prove Lemma~\ref{lemma:conditions}, which states that for each $G$ in $T_{n,m}-T_{n,m}^*$ there exists $H$ in $T_{n,m}^*$ that is 
$3$-stronger than $G$. In a second step we prove that for each $G$ in $T_{n,m}^*-\{H_{n,m}\}$ and every $\rho\in (0,1)$ 
it holds that $R_{H_{n,m}}^3(\rho)>R_G^3(\rho)$. 

\begin{lemma}\label{lemma:notcomplete}
Let $n$, $m$ and $d$ be integers such that $n\geq 5$, $m\geq 5$, and $d\geq 3$.  
If $G$ is in $T_{n,m}$, $v$ is a vertex hanging to $s$, and $G-sv$ is not complete,  
then there exists $H$ in $T_{n,m}$ that is $d$-stronger than $G$.
\end{lemma}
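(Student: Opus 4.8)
The plan is to exploit the fact that a pendant edge at $s$ carries no short $s$--$t$ path, so it may be relocated into the $s$--$t$ component without ever decreasing a pathset count. Since $v$ is a non-terminal vertex of degree one whose only neighbor is $s$, no path joining the terminals passes through $v$, so the edge $sv$ is $d$-irrelevant. Writing $G'=G-sv$ (so $v$ is isolated in $G'$ and $G'$ has $m-1$ edges), the counting identity established inside the proof of Lemma~\ref{lemma:irrelevant} gives $N_i^d(G)=N_i^d(G')+N_{i-1}^d(G')$ for every $i$. I would then produce a non-edge $f$ of $G'$ lying inside the connected component $C$ of $G'$ that contains both $s$ and $t$, and take $H=G'+f$; this $H$ is again a simple graph in $T_{n,m}$ with $v$ isolated.

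Second, I would record how adding a single edge changes the counts. For any non-edge $f$ of $G'$, splitting the $i$-edge $d$-pathsets of $G'+f$ according to whether they contain $f$ yields $N_i^d(G'+f)=N_i^d(G')+M_{i-1}(f)$, where $M_{i-1}(f)$ counts the $(i-1)$-edge subgraphs $S$ of $G'$ for which $S+f$ is a $d$-pathset. Since every $d$-pathset of $G'$ remains one after adding $f$, we have $M_{i-1}(f)\geq N_{i-1}^d(G')$, and this is strict for some $i$ exactly when $f$ is \emph{relevant}, i.e. when $f$ lies on some $s$--$t$ path of length at most $d$ in $G'+f$. Combining with the identity above gives $N_i^d(H)\geq N_i^d(G)$ for all $i$, with strict inequality for some $i$ as soon as $f$ is chosen relevant; by the definition of $d$-stronger, such an $H$ is $d$-stronger than $G$.

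It therefore remains to find a relevant non-edge of $C$, and for this I would only need paths of length at most $3$, which is where the hypothesis $d\geq 3$ enters. Arguing by contradiction, assume no non-edge of $C$ lies on an $s$--$t$ path of length at most $3$ in $C$ together with that edge. Then: (i) $st\in E(C)$, since otherwise $st$ is a relevant non-edge via the length-$1$ path $s-t$; (ii) $N_{G'}(s)\setminus\{t\}=N_{G'}(t)\setminus\{s\}=:W$, since a vertex adjacent to exactly one terminal would give a relevant non-edge via a length-$2$ path $s-y-t$; (iii) $W$ induces a clique, since a non-edge $xy$ inside $W$ would be relevant via $s-x-y-t$; and (iv) $C=\{s,t\}\cup W$, since a vertex $u$ at distance $2$ from $s$ would have a distance-one neighbor $w\in W$ (this neighbor cannot be $t$, as that would force $u\in W$), making $su$ relevant via $s-u-w-t$. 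Items (i)--(iv) together say that $C$ is the complete graph on $\{s,t\}\cup W$, contradicting the hypothesis that $G-sv$ is not complete. Hence a relevant non-edge exists and the lemma follows.

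The main obstacle is this last paragraph: the reduction in the first two paragraphs turns the statement into a purely structural claim about the non-complete component $C$, but verifying that non-completeness forces a relevant non-edge requires ruling out the rigid configuration in which every non-edge is ``useless'', and the distance-$2$ vertex argument of step (iv) is the delicate point. I would also dispose of the degenerate case $v=t$ (excluded here, since a pendant terminal makes $st$ the unique short path and the conclusion would fail) and check throughout that the four vertices on each length-$3$ path are distinct, so that these are genuine paths.
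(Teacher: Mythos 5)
Your proof is correct, and its skeleton matches the paper's: observe that the pendant edge $sv$ is $d$-irrelevant, delete it, and reinsert an edge that creates a new $s$--$t$ path of length at most $3$, so that every coefficient $N_i^d$ is preserved and at least one strictly increases. Where you genuinely diverge is in how the replacement edge is found. The paper runs a direct case analysis (first a vertex adjacent to exactly one terminal, then a pair of nonadjacent common neighbours, then a fallback in which the subgraph induced by $\{s,t\}$ and the common neighbours is complete and an edge outside the terminal component is invoked); that fallback, as written, does not account for a terminal component that properly contains $\{s,t\}\cup W$, for instance one containing a vertex at distance $2$ from $s$ and no edges outside it. You instead argue by contradiction that if no non-edge of the terminal component $C$ is relevant then $C$ must be exactly the complete graph on $\{s,t\}\cup W$, and your step (iv) --- a vertex $u$ at distance $2$ from $s$ has a neighbour $w\in W$, so adding $su$ yields the $3$-path $s\,u\,w\,t$ --- is precisely what closes the configuration the paper's final case glosses over; together with your explicit bookkeeping for ``adding a relevant edge increases some $N_i^d$'' (where strictness should be checked at $i=|E(P)|$ for the new short path $P$, as you essentially do), this gives a tighter argument. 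Two small remarks: if $s$ and $t$ lie in different components of $G-sv$ then your $C$ does not exist, but your step (i) already covers this by taking $f=st$; and your parenthetical claim that the conclusion ``would fail'' when $v=t$ is not accurate --- the conclusion still holds in that case, it is only the irrelevance of $sv$ that breaks down --- though since every application of the lemma in the paper has $v$ nonterminal, this is immaterial.
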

\begin{proof}
In the conditions of the statement there exists no path including $sv$ joining the terminals 
thus $sv$ is $d$-irrelevant. We will construct a two-terminal graph $H$ in $T_{n,m}$ that is $d$-stronger than $G$.
If there exists $v_i$ in $G$ such that $sv_i$ is in $G$ but $v_it$ is not in $G$ then we define $H$ as $G-sv+sv_i$. Similarly, if $sv_i\notin G$ and $v_it \in G$ then we define $H$ as 
$G-sv+sv_i$. Else, if there exists a pair of nonadjacent vertices $v_i$ and $v_j$ that are common neighbors of the terminals then we define $H$ as $G-sv+v_iv_j$. Otherwise, the connected component induced by the terminals and its common neighbors is a complete graph. As 
$G-sv$ is not a complete graph, there exists an additional edge $e$ in $G$ that is not included in the connected component that includes the terminals. Consequently, $e$ is $d$-irrelevant and 
we can define $H$ as $G-st+e$. In either case, $H$ arises from $G$ by the deletion of a $d$-irrelevant edge and the addition of an edge that creates a new $2$-path, or a new $3$-path, joining the terminals in $H$. Consequently, $H$ is $d$-stronger than $G$, as required.
\end{proof}

\begin{lemma}\label{lemma:complete}
Let $n$, $m$ and $d$ be integers such that $n\geq 5$ and $m\geq 5$.  
If $G$ is in $T_{n,m}$, $v$ is a vertex hanging to $s$ and $G-sv$ is complete, then there exists $H$ in $T_{n,m}$ that is $3$-stronger than $G$.
\end{lemma}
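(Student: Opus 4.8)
The plan is to exploit that the pendant edge $sv$ is $3$-irrelevant (no $s$--$t$ path can use the dead-end $v$) and to first pin down the global structure of $G$. Writing $K_k$ for the $s,t$-component of $G-sv$, I would argue that every edge of $G$ incident with a vertex of $K_k\cup\{v\}$ is either an edge of $K_k$ or the edge $sv$: an edge joining such a vertex to a vertex $u$ outside $K_k$ would place $u$ in the $s,t$-component of $G-sv$, contradicting completeness. Hence $E(G)=E(K_k)\cup\{sv\}\cup F$, where $F$ consists of edges lying entirely outside the $s,t$-component; in particular every edge of $F$ is $3$-irrelevant. This splits the argument into two cases according to whether $F$ is empty.

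If $F\ne\emptyset$, fix $e\in F$ and set $H=G-e+vt$, noting $vt\notin E(G)$ because $v$ is hanging to $s$. Then $H$ arises from $G$ by deleting a $3$-irrelevant edge and adding an edge that creates the new $2$-path $s$--$v$--$t$ (the edge $sv$ survives since $e\ne sv$). This is precisely the situation handled at the end of the proof of Lemma~\ref{lemma:notcomplete}: the identity $N_i^3(G)=N_i^3(G-e)+N_{i-1}^3(G-e)$ together with $N_i^3(H)\ge N_i^3(G-e)+N_{i-1}^3(G-e)$, strict for the subset consisting of the edges of the new short path, shows that $H$ is $3$-stronger than $G$.

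If $F=\emptyset$, then $m=\binom{k}{2}+1$, and $m\ge 5$ forces $k\ge 4$, so $K_k$ contains two common neighbors $w_1,w_2$ of the terminals joined by a clique edge $w_1w_2$. Here I would take $H=G-w_1w_2+vt$ and prove coefficient-wise domination directly. Setting $B=E(G)\cap E(H)=(E(K_k)\setminus\{w_1w_2\})\cup\{sv\}$, so that $G=B+w_1w_2$ and $H=B+vt$, the contribution of the subsets of $B$ that are already $3$-pathsets cancels, and one obtains $N_i^3(H)-N_i^3(G)=|A_1\cap\binom{B}{i-1}|-|A_2\cap\binom{B}{i-1}|$, where $A_1$ collects the non-$3$-pathsets $S\subseteq B$ for which adding $vt$ creates a short path (equivalently $sv\in S$, giving $s$--$v$--$t$), and $A_2$ collects those for which adding $w_1w_2$ creates a short path (equivalently $\{sw_1,w_2t\}\subseteq S$ or $\{sw_2,w_1t\}\subseteq S$, giving $s$--$w_1$--$w_2$--$t$ or its reverse). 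The statement thus reduces to the single inequality $|A_1\cap\binom{B}{j}|\ge|A_2\cap\binom{B}{j}|$ for every $j$.

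I expect this inequality to be the main obstacle, and I would settle it with two observations. Because $sv$ is a pendant, $S\mapsto S-sv$ is a bijection between $A_1\cap\binom{B}{j}$ and the non-$3$-pathsets of size $j-1$ in $B':=E(K_k)\setminus\{w_1w_2\}$; and splitting $A_2$ according to whether $sv\in S$ expresses $|A_2\cap\binom{B}{j}|$ as the number of cross-pattern non-pathsets of $B'$ of sizes $j-1$ and $j$. So it suffices to inject the size-$j$ cross-pattern non-pathsets of $B'$ into the size-$(j-1)$ non-pathsets of $B'$ that carry no cross pattern. The key point is an exclusivity observation: a non-$3$-pathset cannot contain both $\{sw_1,w_2t\}$ and $\{sw_2,w_1t\}$, since these would already include the $2$-path $s$--$w_1$--$t$. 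Hence each cross-pattern non-pathset realizes exactly one pattern, and deleting its $s$-incident pattern edge ($sw_1$, resp.\ $sw_2$) destroys that pattern without creating the other; one then checks the two branches cannot collide, because the first requires $w_1t\notin S'$ while the second requires $w_1t\in S'$. Strictness is immediate at $i=2$, where $A_1$ contains the singleton $\{sv\}$ but $A_2$ contains no singleton, so that $H$ has the $2$-edge pathset $\{sv,vt\}$ with no counterpart through $v$ in $G$.
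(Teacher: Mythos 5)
Your argument is correct. In the main case it performs exactly the same exchange as the paper --- replace a clique edge $w_1w_2$ between two common neighbours of the terminals by the new edge $vt$ --- but the verification that no coefficient decreases is organized differently. The paper writes down a direct five-case injection from the $i$-edge $3$-pathsets of $G$ into those of $H$; you instead cancel the contribution of the common edge set $B$, reduce to the single inequality $|A_1\cap\binom{B}{j}|\geq |A_2\cap\binom{B}{j}|$, and settle it by injecting the size-$j$ cross-pattern non-pathsets of $B'$ into the size-$(j-1)$ pattern-free ones via deletion of the $s$-incident pattern edge. Your exclusivity observation (a non-$3$-pathset cannot carry both patterns, since $sw_1,w_1t$ would already be a $2$-path) is what makes the map well defined, and your $w_1t\in S'$ versus $w_1t\notin S'$ dichotomy correctly rules out collisions between the two branches; this is somewhat easier to audit than the paper's case-by-case map, whose injectivity is asserted rather than checked. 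You also add a preliminary case the paper does not treat: since ``complete'' only constrains the $s,t$-component, $G-sv$ may carry edges $F$ outside that component, in which case the component could be too small to supply two common neighbours; your fallback of trading a $3$-irrelevant edge of $F$ for $vt$, as at the end of the proof of Lemma~\ref{lemma:notcomplete}, covers a configuration that the paper's opening sentence (``$G-sv$ is a complete graph on at least $4$ vertices'') silently assumes away. Both routes obtain the strict gain at $i=2$ from the new $2$-path $sv,vt$.
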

\begin{proof}
As $G-sv$ is a complete graph on at least $4$ edges, it has at  least $4$ vertices. Pick two vertices $v_i$ and $v_j$ that are common neighbors to both terminals in $G$, and define $H$ as $G-v_iv_j+vt$. As $sv,vt\in H$ and $d\geq 3$, it is clear that $N_2^3(H)=N_2^3(G)+1$. Let $i\in \{1,2,\ldots,m\}$. It is enough to prove that $N_i^3(H)\geq N_i^3(G)$. Let $\mathcal{P}_i^3(G)$ and $\mathcal{P}_i^3(H)$ be the classes of all $3$-pathsets of $G$ and $H$ with $i$ edges, respectively. Define the function $f_i:\mathcal{P}_i^3(G) \to \mathcal{P}_i^3(H)$ as follows:
\begin{equation*}
f_i(G') = 
\begin{cases}
        G', & \text{if } v_iv_j\notin G'\\
        G'-v_iv_j+vt, & \text{if } v_iv_j\in G' \text{ and } G'-v_iv_j\in \mathcal{P}_{i-1}^3(G)\\
        G'-v_iv_j+vt, & \text{if } v_iv_j\in G', G'-v_iv_j\notin \mathcal{P}_{i-1}^3(G) \text{ and } sv\in G'\\
        G'-v_iv_j+vt-sv_i+sv_j, & \text{if } v_iv_j\in G', G'-v_iv_j\notin \mathcal{P}_{i-1}^3(G), sv\notin G' \text{ and } sv_i,v_jt\in G'\\
        G'-v_iv_j+vt-sv_j+sv, & \text{if } v_iv_j\in G', G'-v_iv_j\notin \mathcal{P}_{i-1}^3(G), sv\notin G' \text{ and } sv_j,v_it\in G'    
\end{cases}        
\end{equation*}
Observe that if $G'$ is in $\mathcal{P}_i^3(G)$ but 
$G'-v_iv_j$ is not in $\mathcal{P}_{i-1}^3(G)$, then 
either $sv_i,v_jt\in G'$ or $sv_j,v_it\in G'$. Therefore, the five cases in the definition of $f_i$ covers the domain of $3$-pathsets in $G$ with $i$ edges and $f_i$ is well-defined. By construction, $f_i$ is injective. Then, $N_i^3(H)=|\mathcal{P}_i^3(H)|\geq |\mathcal{P}_i^3(G)| = N_i^3(G)$ and $H$ is $3$-stronger than $G$, as required. 
\end{proof}

\begin{lemma}\label{lemma:terminalneighbors}
Let $n$, $m$ and $d$ be integers such that $n\geq 5$ and $m\geq 5$. For each $G$ in $T_{n,m}$ whose terminals are not true twins there exists $H$ in $T_{n,m}$ that is $3$-stronger than $G$ whose terminals are true twins.
\end{lemma}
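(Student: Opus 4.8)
The plan is to show that whenever the terminals of a graph in $T_{n,m}$ are not true twins, that graph can be replaced by a strictly $3$-stronger one, and then to let finiteness of $T_{n,m}$ finish the job. Recall that the terminals of $G$ are true twins exactly when $st\in G$ and no non-terminal vertex is adjacent to exactly one terminal; I will call such a vertex \emph{unbalanced}. The core of the proof is the following claim: \emph{if the terminals of $G\in T_{n,m}$ are not true twins, then some $G'\in T_{n,m}$ is $3$-stronger than $G$.} Granting it, I let $H$ maximise $\sum_i N_i^3$ over the finite, nonempty set $\mathcal{S}=\{G'\in T_{n,m}\colon N_i^3(G')\ge N_i^3(G)\text{ for every }i\}$. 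The claim provides a member of $\mathcal{S}$ with strictly larger $\sum_i N_i^3$ than $G$, so $H$ dominates $G$ coordinatewise with at least one strict inequality, i.e.\ $H$ is $3$-stronger than $G$; and if the terminals of $H$ were not true twins, the claim applied to $H$ would produce a member of $\mathcal{S}$ with even larger $\sum_i N_i^3$, contradicting maximality. Thus $H$ is the desired graph, and it remains only to prove the claim.

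To prove the claim I distinguish three cases. If $st\notin G$, then Lemma~\ref{lemma:st} already gives that $G-e+st$ is $3$-stronger than $G$ for any edge $e$, and we are done. So assume $st\in G$; since the terminals are not true twins, some non-terminal vertex is unbalanced. If, after possibly interchanging $s$ and $t$ (which preserves every $N_i^3$), every unbalanced vertex has degree $1$, then there is an unbalanced vertex $v$ hanging to $s$, and Lemma~\ref{lemma:notcomplete} or Lemma~\ref{lemma:complete}, according as $G-sv$ is or is not complete, produces a graph that is $3$-stronger than $G$, proving the claim.

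The central case is when some unbalanced vertex has degree at least $2$; interchanging $s$ and $t$ if necessary, I may take $a$ with $sa\in G$, $at\notin G$, and $\deg(a)\ge 2$. Then $a$ has a neighbour $w\notin\{s,t\}$, because its only possible terminal neighbour is $s$. I set $H=G-aw+at$, which lies in $T_{n,m}$ and creates the new length-$2$ path $s\text{--}a\text{--}t$. I claim $H$ is $3$-stronger than $G$, via an injection $f_i\colon \mathcal{P}_i^3(G)\to\mathcal{P}_i^3(H)$ for each $i$, where $\mathcal{P}_i^3(\cdot)$ denotes the set of $3$-pathsets with $i$ edges. Mimicking Lemma~\ref{lemma:complete}, I put $f_i(G')=G'$ when $aw\notin G'$ and $f_i(G')=G'-aw+at$ when $aw\in G'$. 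The first branch is valid since then $G'\subseteq G-aw\subseteq H$ and its short path avoids $aw$. For the second branch, the only delicate point is when every short $s$--$t$ path of $G'$ uses $aw$; here is the observation that makes everything clean: because $aw$ is an interior edge (both endpoints non-terminal), such a path can only be $s\text{--}a\text{--}w\text{--}t$, which forces $sa\in G'$, so $G'-aw+at$ contains the length-$2$ path $s\text{--}a\text{--}t$ and is genuinely a $3$-pathset. Injectivity is immediate, as images of the two branches are distinguished by whether they contain $at$ and each branch is individually invertible; and the $2$-edge pathset $\{sa,at\}$ of $H$ lies outside the image of $f_2$ (its only candidate preimage $\{sa,aw\}$ is not a pathset of $G$), so $N_2^3(H)>N_2^3(G)$ and $H$ is strictly $3$-stronger.

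I expect the main obstacle to be precisely the second-branch analysis of this injection; the decisive simplification is that the removed edge is interior, which pins the only surviving short path down to $s\text{--}a\text{--}w\text{--}t$ and thereby guarantees $sa\in G'$, so that the added edge $at$ automatically completes a short path. The hanging-vertex situation is outsourced entirely to Lemmas~\ref{lemma:notcomplete} and~\ref{lemma:complete}, and the passage from the single-step claim to a true-twin graph that is $3$-stronger than the original $G$ is handled by the maximal-element argument rather than by controlling any explicit defect quantity along the way, which is what lets the hanging case contribute even without visibly reducing the imbalance.
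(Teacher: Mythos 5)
Your proposal is correct, and its core construction coincides with the paper's: in the decisive case you pick an unbalanced vertex $a$ of degree at least $2$ adjacent to $s$ but not $t$, pass to $H=G-aw+at$, and verify $3$-strength by the same injection on $i$-edge pathsets, with the same key observation that a short $s$--$t$ path forced to use the interior edge $aw$ must be $s$--$a$--$w$--$t$, so that $sa\in G'$ and the new $2$-path $sa,at$ survives the swap. Where you genuinely depart from the paper is in how you reach a graph whose terminals are actually true twins: the paper's proof performs a single improvement step (delegating the hanging-vertex case to Lemmas~\ref{lemma:notcomplete} and~\ref{lemma:complete}) and stops, leaving implicit both the iteration and the fact that the process terminates at a true-twin graph dominating the original $G$; your wrapper --- maximizing $\sum_i N_i^3$ over the finite set of graphs coordinatewise dominating $G$ and showing the maximizer must have true-twin terminals --- makes that step explicit and airtight, at the cost of no longer exhibiting $H$ constructively. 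This extremal argument is a clean way to close a gap the paper glosses over, and since $3$-strength need not be transitive in the strict coordinatewise sense without such care, it is a worthwhile addition rather than mere pedantry.
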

\begin{proof}
Let $G$ be as in the statement. By Lemma~\ref{lemma:st} we assume without loss of generality that $st$ is in $G$. As the terminals in $G$ are not true twins, let us assume that there exists $v$ in $G$ such that 
$sv \in G$ but $vt\notin G$ (the other case is analogous). If $deg_G(v)=1$  then $v$ is a vertex hanging to $s$. Either if $G-sv$ is complete or not, by Lemma~\ref{lemma:notcomplete} and Lemma~\ref{lemma:complete} there exists $H$ in $T_{n,m}$ that is $3$-stronger than $G$. Therefore, we can assume from now on that $deg_G(v)\geq 2$, so $sv, vw \in G$ for some vertex $w$ in $G$. 

Define $H$ as $G-vw+vt$. As $H$ is in $T_{n,m}$, it is enough to prove that $H$ is $3$-stronger than $G$. Let $i\in \{1,2,\ldots,m\}$. We will prove that $N_i^3(H)\geq N_i^3(G)$. 
As $N_2^3(G')=N_2^3(G)+1$, the lemma will follow. 
For each $3$-pathset $G'$ of $G$ we define $H'$ as $G'$ 
if $vw\notin G'$ or $G'-vw+vt$ otherwise. 
Clearly, if $vw\notin G'$ then $H'$ is a $3$-pathset in $H$. 
Now, if $vw\in G'$ then let us consider two different cases. 
If $G'-vw \in \mathcal{P}_{i-1}^3(G)$ then clearly $H'=G'-vw+vt\in \mathcal{P}_i^3(G')$. Otherwise, if $G'-vw\notin \mathcal{P}_{i-1}^3(G)$ then $vw$ belongs to some $3$-path in $G'$. As $vt\notin G'$, the only $3$-path in $G'$ including $vw$ is $sv,vw,wt$. Consequently, $sv,vw,wt\in G'$ and $H'=G'-vw+vt$ includes the $2$-path $sv,vt$. We conclude that $H' \in \mathcal{P}_{i}^{3}(G')$. As the function $f:\mathcal{P}_{i}^{3}(G) \to \mathcal{P}_i^3(H)$ such that 
$f_i(G')=H'$ is injective, we conclude that  $N_i^3(G)=|\mathcal{P}_i^3(H)|\geq |\mathcal{P}_i^3(G)|= N_i^3(G)$, and the lemma follows.
\end{proof}

\begin{lemma}\label{lemma:allrelevant}
Let $n$, $m$ and $d$ be integers such that $n\geq 5$, $m\geq 5$, and $d\geq 3$. 
For each $G$ in $T_{n,m}$ such that $s$ and $t$ are true twins with some irrelevant edge there exists $H$ in $T_{n,m}^*$ that is $3$-stronger than $G$.
\end{lemma}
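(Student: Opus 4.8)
The plan is to induct on the number of $3$-irrelevant edges of $G$. First I would pin down what these edges are. Since $s$ and $t$ are true twins, $st\in G$ and the common neighborhood $C:=N_G(s)\setminus\{t\}=N_G(t)\setminus\{s\}$ satisfies that every $s$--$t$ path of length at most $3$ has all its internal vertices in $C$ (length-$2$ paths $s,v,t$ with $v\in C$, and length-$3$ paths $s,u,w,t$ with $u,w\in C$). Hence an edge is $3$-irrelevant precisely when it is incident to a vertex outside $C\cup\{s,t\}$; in particular a true-twin graph has no irrelevant edge if and only if it lies in $T_{n,m}^*$. So it suffices to exhibit, for any true-twin $G$ carrying at least one irrelevant edge, a true-twin graph $G_1\in T_{n,m}$ that is $3$-stronger than $G$ and has strictly fewer irrelevant edges: applying this repeatedly and using that ``$3$-stronger than'' is transitive, we reach some $H\in T_{n,m}^*$ that is $3$-stronger than $G$.

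The engine producing $G_1$ is a counting identity refining Lemma~\ref{lemma:irrelevant}: if $e$ is $3$-irrelevant in $G$ then $N_i^3(G)=N_i^3(G-e)+N_{i-1}^3(G-e)$, whereas adjoining to $G-e$ a non-edge $e'$ gives $N_i^3(G-e+e')=N_i^3(G-e)+(\text{number of }i\text{-edge }3\text{-pathsets of }G-e+e'\text{ containing }e')\ge N_i^3(G-e)+N_{i-1}^3(G-e)$, the last bound coming from $K\mapsto K+e'$ on the $(i-1)$-edge $3$-pathsets of $G-e$. Thus $G_1:=G-e+e'$ always satisfies $N_i^3(G_1)\ge N_i^3(G)$ for every $i$, with strict inequality as soon as $e'$ lies on a brand-new short $s$--$t$ path. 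When two common neighbors $u,w\in C$ are nonadjacent I take $e'=uw$: this leaves $N_G[s]=N_G[t]$ (true twins preserved), creates the new length-$3$ path $s,u,w,t$ (strict at $i=3$), and deletes one irrelevant edge, giving exactly the desired $G_1$.

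The obstacle is the case where $C$ induces a clique, so $\{s,t\}\cup C$ is complete and no within-$C$ edge can be added without destroying true twins. Here I would instead promote an outside vertex $d$ (an endpoint of an irrelevant edge) to a common neighbor. When $|C|\ge 2$ I use the two-edge move $G_1:=G-e-uw+sd+dt$, deleting the irrelevant $e=dx$ together with a clique edge $uw$ ($u,w\in C$); when $|C|\le 1$, the bound $m\ge 5$ forces at least two irrelevant edges $e_1,e_2$, and I use $G_1:=G-e_1-e_2+sd+dt$. Either move preserves true twins (both closed neighborhoods of the terminals gain exactly $d$), makes $d$ a common neighbor, and strictly lowers the irrelevant count (we delete the irrelevant $e$, resp.\ $e_1,e_2$, and any former $d$--$C$ edges become relevant); moreover it creates the length-$2$ path $s,d,t$, so $N_2^3(G_1)=N_2^3(G)+1$, making $G_1$ strictly stronger already at $i=2$.

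I expect the delicate part to be verifying $N_i^3(G_1)\ge N_i^3(G)$ for \emph{all} $i$ in this clique case, since now a relevant clique edge $uw$ is being removed and its length-$3$ paths $s,u,w,t$ must be rerouted. The plan is to build an explicit injection $\mathcal P_i^3(G)\to\mathcal P_i^3(G_1)$ in the spirit of the five-case map in the proof of Lemma~\ref{lemma:complete}: a pathset avoiding both $e$ and $uw$ maps to itself; one containing $uw$ has that edge swapped for $sd$ (or $dt$), rerouting $s,u,w,t$ through the new common neighbor $d$; and one containing $e$ is repaired using the irrelevance of $e$ exactly as in the identity above. Checking that these cases exhaust the domain, land in $\mathcal P_i^3(G_1)$, and remain injective for every $i$ is the main technical burden; once it is in place, the induction on the number of irrelevant edges together with transitivity of ``$3$-stronger than'' yields the required $H\in T_{n,m}^*$.
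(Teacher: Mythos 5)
Your reduction to ``find a true-twin $G_1\in T_{n,m}$ that is $3$-stronger and has strictly fewer irrelevant edges, then iterate'' is exactly the paper's strategy, and your characterization of the $3$-irrelevant edges as precisely those meeting a vertex outside $C\cup\{s,t\}$ is correct and useful. The non-clique case and the $|C|\le 1$ case are sound, because there you only ever delete irrelevant edges, so your counting identity $N_i^3(G-e+e')\ge N_i^3(G-e)+N_{i-1}^3(G-e)=N_i^3(G)$ does all the work. The gap is the clique case with $|C|\ge 2$: there you delete the \emph{relevant} edge $uw$, and the injection $\mathcal{P}_i^3(G)\to\mathcal{P}_i^3(G_1)$ that this forces is exactly what you leave unverified. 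Worse, the rerouting you sketch fails as stated: if $G'$ contains $uw$ and its only short $s$--$t$ path is $s,u,w,t$, then swapping $uw$ for $sd$ alone yields a subgraph with no $s$--$t$ path of length at most $3$, since $dt$ is absent; to route through $d$ you must insert both $sd$ and $dt$, which changes the edge count and forces a two-for-two swap inside the pathset, after which injectivity (collisions between images of pathsets that did or did not contain $e$, or that used $s,u,w,t$ versus $s,w,u,t$) must be argued from scratch. So the proposal does not establish $N_i^3(G_1)\ge N_i^3(G)$ in the one case that carries the difficulty.

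The paper sidesteps this entirely, and you could too: since \emph{every} edge incident to a vertex $u$ outside $C\cup\{s,t\}$ is irrelevant, whenever such a $u$ has degree at least $2$ you may delete \emph{two} irrelevant edges incident to $u$ and add $su+ut$. No relevant edge is touched, your engine identity applies verbatim (applied twice), and the new $2$-path $\{su,ut\}$ gives strictness at $i=2$ while turning $u$ into a common neighbor, so the irrelevant count drops. The only residual situation is when the relevant outside vertex has degree $1$, i.e.\ is hanging; that is precisely the configuration the paper delegates to Lemmas~\ref{lemma:notcomplete} and~\ref{lemma:complete}, where the single unavoidable deletion of a clique edge is handled by the five-case injection you allude to. Restructuring your clique case along these lines would close the gap without having to build the two-edge-deletion injection directly.
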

\begin{proof}
Let $G$ be in $T_{n,m}$ satisfying all the conditions of the statement, and let $uv$ be some irrelevant edge in $G$. 
As the terminals in $G$ are true twins, all edges incident to $s$ or to $t$ belong to some $1$-path or some $2$-path joining $s$ and $t$ (the edge $st$ is incident to both vertices and is a $1$-path). 
Consequently, the endpoints $u$ and $v$ belong to $G-s-t$. Observe that $u$ and $v$ cannot be common neighbors of $s$ and $t$, as in that case we would have that $su,uv,vt$ would be a $3$-path including 
the edge $e$, which contradicts that $e$ is irrelevant. We can assume without loss of generality that $u$ is neither a neighbor of $s$ nor $t$. If $deg_G(u)=1$ and $G-uv$ is complete then the statement follows from Lemma~\ref{lemma:complete}. If $deg_G(u)=1$ and $G-uv$ is not complete 
then the statement follows from Lemma~\ref{lemma:notcomplete}. Finally, if $deg_G(u)\geq 2$, let us pick two edges $e$ and $e'$ incident to $u$. As $u$ is neither a neighbor of $s$ nor $t$, both edges $e$ and $e'$ are $3$-irrelevant. The graph $G_1$ given by $G-e-e'+su+ut$ has all the $3$-pathsets of $G$ but also the $2$-path $su,ut$, hence $G_1$ is $3$-stronger than $G$. Observe that $G_1$ is in $T_{n,m}$ and its terminals are true twins and $G_1$ has fewer $3$-irrelevant edges than $G$. 

Let $G_0$ be the graph $G$. After repeated application of the previous process we will obtain a sequence of graphs $G_0,G_1,\ldots, G_r$ in $T_{n,m}$ whose terminals are true twins such that $G_r$ has no $3$-irrelevant edges. The lemma follows by letting $H$ as $G_r$. 
\end{proof}

\begin{lemma}\label{lemma:conditions}
Let $n$ and $m$ be integers such that $n\geq 5$ and $m\geq 5$. 
For each $G$ in $T_{n,m}-T_{n,m}^*$ there exists $H$ in $T_{n,m}^*$ that is $3$-stronger than $G$.
\end{lemma}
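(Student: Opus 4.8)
The plan is to assemble Lemma~\ref{lemma:conditions} directly from the two preceding reduction lemmas, together with the observation that the relation ``$3$-stronger than'' is transitive. Transitivity is immediate from the definition: if $N_i^3(A)\ge N_i^3(B)$ for every $i$ with strict inequality at some index, and likewise $N_i^3(B)\ge N_i^3(C)$ for every $i$ with strict inequality at some index, then $N_i^3(A)\ge N_i^3(C)$ for every $i$, and the strict inequality of $A$ over $B$ at some index $j$ gives $N_j^3(A)>N_j^3(B)\ge N_j^3(C)$. Hence $A$ is $3$-stronger than $C$. This is the glue that lets me chain the fixes below.

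Next I would unfold the definition of $T_{n,m}^*$: a two-terminal graph lies in $T_{n,m}^*$ exactly when \emph{(a)} its terminals are true twins and \emph{(b)} it has no $3$-irrelevant edges. Therefore $G\in T_{n,m}-T_{n,m}^*$ means that $G$ fails \emph{(a)} or fails \emph{(b)}. I would split on property \emph{(a)}. If the terminals of $G$ are not true twins, then Lemma~\ref{lemma:terminalneighbors} supplies a two-terminal graph $G'$ in $T_{n,m}$ that is $3$-stronger than $G$ and whose terminals are true twins; if the terminals of $G$ are already true twins, I simply set $G'=G$. In both branches I then hold a two-terminal graph $G'$ with true-twin terminals.

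Finally I would examine property \emph{(b)} for $G'$. If $G'$ has no $3$-irrelevant edges then, having true-twin terminals, it already lies in $T_{n,m}^*$, and the lemma holds with $H=G'$. Note that this sub-case can only arise in the first branch: in the true-twin branch we have $G'=G$, and if $G'$ had no $3$-irrelevant edge then $G$ would satisfy both \emph{(a)} and \emph{(b)}, contradicting $G\notin T_{n,m}^*$; so here $G'$ is genuinely (strictly) $3$-stronger than $G$. If instead $G'$ has some $3$-irrelevant edge, then Lemma~\ref{lemma:allrelevant} applies to $G'$ (its terminals are true twins) and produces $H\in T_{n,m}^*$ that is $3$-stronger than $G'$; transitivity then yields that $H$ is $3$-stronger than $G$, as required.

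I do not anticipate a genuine obstacle here: all the combinatorial work has already been discharged in Lemmas~\ref{lemma:terminalneighbors} and~\ref{lemma:allrelevant}. The only points requiring care are the bookkeeping of the case split—making sure every $G\notin T_{n,m}^*$ is routed through at least one strict improvement—and the explicit use of transitivity, neither of which is deep.
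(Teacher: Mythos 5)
Your proposal is correct and follows essentially the same route as the paper: chain Lemma~\ref{lemma:terminalneighbors} and Lemma~\ref{lemma:allrelevant} via transitivity of ``$3$-stronger'', with your extra care about where the strict inequality comes from being a welcome (if implicit in the paper) detail. The only cosmetic difference is that the paper inserts an explicit preliminary step adding the edge $st$ via Lemma~\ref{lemma:st}, which you legitimately absorb into the invocation of Lemma~\ref{lemma:terminalneighbors} (and which is automatic in the true-twin branch, since true twins are adjacent).
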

\begin{proof}
Let $G$ be an arbitrary two-terminal graph in $T_{n,m}-T_{n,m}^*$. If $st$ is not in $G$ then we pick an edge $e$ in $G$ and define $G_1$ as $G-e+st$. By Lemma~\ref{lemma:st} we know that $G_1$ is $3$-stronger than $G$. Otherwise, we define $G_1$ as $G$. In either case, $st$ is in $G_1$. 
If $s$ and $t$ are true twins in $G_1$ then we set $G_2$ as $G_1$. Otherwise, 
by Lemma~\ref{lemma:terminalneighbors}, there exists $G_2$ in $T_{n,m}$ whose terminals are true twins that is $3$-stronger than $G_1$. 
In either case, the terminals in $G_2$ are true twins. Finally, if $G_2$ has no irrelevant edges then we set $H$ as $G_2$. Otherwise, by Lemma~\ref{lemma:allrelevant} there exists $H$ in $T_{n,m}^*$ 
that is $3$-stronger than $G_2$. 
Consequently, there exists $H$ in $T_{n,m}^{*}$ that is $3$-stronger than $G$, as required.
\end{proof}

The $3$-constrained two-terminal unreliability of $H_{n,m}$ only depends on $\rho$ and $m$.  
For short, we denote $U_m(\rho)$ for $U_{H_{n,m}}^3(\rho)$. 

\begin{lemma}\label{lemma:Um}
Let $n$ be any integer such that $n\geq 6$. The following assertions hold:
\begin{enumerate}[label=(\roman*)]    
    \item\label{it1} For each integer $m$ such that $5\leq m \leq 2n-5$ and every $\rho$ in $[0,1]$, $U_{m+2}(\rho)=(1-(1-\rho)^2)U_{m}(\rho)$.
    \item\label{it2}  For each integer $m$ such that $5\leq m \leq 2n-4$ and every $\rho$ in $[0,1]$, $U_{m+1}(\rho)\leq U_{m}(\rho)$.
    \item \label{it3} For each pair of integers $m$ and $x$ such that $9\leq m \leq 2n-3$ and  
    $1\leq x\leq \frac{m-7}{2}$ and every $\rho$ in $[0,1]$,
    $$U_m(\rho)\leq \rho^2U_{m-x-2}(\rho)+2(1-\rho)\rho^{x+1}\sum_{i=0}^x \binom{x}{i}(1-\rho)^{i}U_{m-2-x-i}(\rho).$$    
\end{enumerate}
\end{lemma}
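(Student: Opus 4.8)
The backbone of the whole lemma is an explicit \emph{gadget factorization} of $U_m$. The plan is to first record that, up to isolated vertices, $H_{n,m}$ is a union of $s$--$t$ paths of length at most three that pairwise meet only at the terminals: the edge $st$; a family of internally disjoint $2$-paths through common neighbors of $s$ and $t$; and, when $m$ is even, the four-vertex core on $\{s,t,v_3,v_4\}$ carrying the edge $v_3v_4$ (two further $2$-paths and the two $3$-paths $sv_3v_4t$ and $sv_4v_3t$). Since every $s$--$t$ path of length at most three lies inside a single gadget and distinct gadgets are edge-disjoint, the event ``$H_{n,m}$ has no $s$--$t$ path of length at most $3$'' factorizes into independent gadget events. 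Writing $\alpha=1-(1-\rho)^2$ for the failure probability of one $2$-path and $Q$ for the unreliability of the core, I would obtain
\begin{equation*}
U_m(\rho)=
\begin{cases}
\rho\,\alpha^{(m-1)/2}, & m\text{ odd},\\
\rho\,\alpha^{(m-6)/2}\,Q, & m\text{ even},
\end{cases}
\end{equation*}
together with the closed form $Q=\rho\alpha^2+(1-\rho)\rho^2(2-\rho^2)$, obtained by conditioning on the edge $v_3v_4$. The two elementary factorizations $\alpha^2-Q=2\rho^2(1-\rho)^3$ and $Q-\alpha^3=\rho^2(1-\rho)^2\big((1-\rho)^2+1\big)$ then yield the pair of inequalities $\alpha^3\le Q\le\alpha^2$ on $[0,1]$, which will drive the parity arguments below.

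Granting the factorization, parts (i) and (ii) are immediate. For (i), passing from $m$ to $m+2$ (same parity) only appends one more internally disjoint $2$-path, hosted on a spare isolated vertex that exists precisely because $m\le 2n-5$; the no-short-path event acquires one extra independent factor $\alpha$, so $U_{m+2}=\alpha\,U_m$. For (ii) I would split on the parity of $m$. When $m$ is odd, $H_{n,m+1}$ is exactly $H_{n,m}$ with the single edge $v_3v_4$ added, so the quotient is $U_{m+1}/U_m=Q/\alpha^2\le1$; when $m$ is even, $H_{n,m+1}$ trades the core for one more plain $2$-path (again requiring a spare vertex, hence $m\le 2n-4$), giving $U_{m+1}/U_m=\alpha^3/Q\le1$. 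In both cases the monotonicity $U_{m+1}\le U_m$ is exactly one of the two inequalities $\alpha^3\le Q\le\alpha^2$ just established.

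Part (iii) is the real obstacle, and I would attack it by induction on $x$. The shape of the right-hand side already signals its origin: the factor $\rho^{x+1}$ and the binomial weights $\binom{x}{i}(1-\rho)^i$ are the law of the number $i$ of surviving edges among a block of $x$ conditioned edges, while the shrinking index $m-2-x-i$ records how many \emph{relevant} edges (in the sense of Lemma~\ref{lemma:irrelevant}) remain after the conditioning. The base case $x=1$ reduces, via the factorization above, to a single polynomial inequality in $\rho$; the inductive step would express the $(x+1)$-version in terms of the $x$-version by peeling off one more conditioned edge and re-applying (i) and (ii), again leaving a polynomial inequality to be established.

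The hard part is genuinely these polynomial inequalities. Unlike the clean factorizations behind $\alpha^3\le Q\le\alpha^2$, the estimates arising in (iii) mix the two parity formulas for $U$ across the summation index $i$: consecutive terms $U_{m-2-x-i}$ alternate between the odd and the even shape, so the binomial sum does not collapse by the binomial theorem and must be regrouped parity-by-parity before it can be compared with $U_m$. Controlling this parity-split sum uniformly in $\rho\in[0,1]$, and in particular in the small-$\rho$ regime where the two sides have comparable leading order, is where I expect essentially all of the difficulty to lie; the role of the hypotheses $9\le m$ and $1\le x\le\frac{m-7}{2}$ is precisely to keep every index appearing on the right at least $5$, so that each $U$ is a bona fide value of the two-case formula.
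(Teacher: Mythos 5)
Your closed forms for $U_m$ are correct ($U_m=\rho\,\alpha^{(m-1)/2}$ for odd $m$ and $U_m=\rho\,\alpha^{(m-6)/2}Q$ for even $m$, with $\alpha=1-(1-\rho)^2$ and $Q$ the unreliability of the five-edge core), and parts (i) and (ii) of your argument are sound. Your route to (ii) is in fact tidier than the paper's: the paper proves (ii) by induction on $m$ using (i), with base cases $U_6\le U_5$ (subgraph monotonicity) and $U_7\le U_6$ left as a ``simple exercise''; your identity $Q-\alpha^3=\rho^2(1-\rho)^2\bigl((1-\rho)^2+1\bigr)$ is exactly that exercise, and $\alpha^2-Q=2\rho^2(1-\rho)^3$ handles the other parity, so your parity split closes (ii) completely rather than deferring anything.

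Part (iii), however, is not proved. What you have written is a plan --- induction on $x$, ``reducing to polynomial inequalities to be established'' --- and you explicitly concede that essentially all of the difficulty lives in those unestablished inequalities. Since those inequalities \emph{are} the content of (iii), this is a genuine gap, not a presentational one. For comparison, the paper does not pass through polynomial inequalities at all: it uses (i) to write $U_m=\rho^2U_{m-2}+2\rho(1-\rho)U_{m-2}$, bounds the first summand by $\rho^2U_{m-x-2}$ via (ii), and treats the second by a conditioning argument inside $H_{n,m-2}$: it isolates $x$ degree-two vertices $A$ carrying plain $2$-paths, conditions on which $i$ edges of $sA$ survive and on the failure of the matching edges into $t$, and reads the binomial sum $\rho^x\sum_{i=0}^x\binom{x}{i}(1-\rho)^iU_{m-2-x-i}$ off that decomposition. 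A concrete warning for whichever route you complete: the intermediate bound $U_{m-2}\le\rho^x\sum_{i=0}^x\binom{x}{i}(1-\rho)^iU_{m-2-x-i}$ (which is what the paper's reduction asks for, and what a term-by-term comparison via (ii) would try to give) is \emph{false} --- for $m=9$, $x=1$, $\rho=1/2$ its left side is $27/128$ and its right side is $25/128$ --- even though the full inequality (iii) does hold there ($81/512$ versus $82/512$). So the slack gained by replacing $U_{m-2}$ with $U_{m-x-2}$ in the $\rho^2$ term must actually be spent, the estimates are tight, and the ``polynomial inequalities'' you defer cannot be waved through by any generic monotonicity argument; until they are written down and verified, (iii) remains open in your write-up.
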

\begin{proof} 
Let $n$ be any fixed integer such that $n\geq 6$. Let us proof each of the assertions separately.
\begin{enumerate}[label=(\roman*)]    
\item Observe that, for each integer $m$ such that $5\leq m \leq 2n-5$, 
$H_{n,m+2}$ is precisely $H_{n,m}$ plus a disjoint $2$-path joining $s$ and $t$. As the 3-constrained two-terminal unreliability of a $2$-path joining $s$ and $t$ equals $(1-(1-\rho)^2)$ and the edge failures are independent, we get that 
$U_{m+2}(\rho) = (1-(1-\rho)^2)U_{m}(\rho)$, as required.
\item We proceed by induction over $m\in \{5,6,\ldots,2n-4\}$. 
As $H_{n,5}\subseteq H_{n,6}$ we know that $U_6(\rho)\leq U_5(\rho)$. It is a simple exercise to prove that $U_7(\rho)\leq U_6(\rho)$ thus $U_7(\rho)\leq U_6(\rho)\leq U_5(\rho)$.  
Now, assume that $U_{h+2}(\rho)\leq U_{h+1}(\rho)\leq U_h(\rho)$ for some $h\geq 5$. It is enough to prove that $U_{h+3}(\rho)\leq U_{h+2}(\rho)$. 
As $h\geq 5$, by~\ref{it1} we know that $U_{h+3}(\rho)=(1-(1-\rho)^2)U_{h+1}(\rho)$ and by our inductive assumption, $U_{h+1}(\rho)\leq U_{h}(\rho)$. Consequently, $U_{h+3}(\rho)=(1-(1-\rho)^2)U_{h+1}(\rho)\leq (1-(1-\rho)^2)U_h(\rho)=U_{h+2}(\rho)$. 

\item As $m\geq 9$, by~\ref{it1} we know that $U_{m}(\rho)=(1-(1-\rho)^2)U_{m-2}(\rho) = \rho^2U_{m-2}(\rho)+2(1-\rho)\rho U_{m-2}(\rho)$.   
As $m-2\geq 5$ and $m-x-2\geq 5$, by~\ref{it2} we know that $U_{m-2}(\rho)\leq U_{m-x-2}(\rho)$ for every $\rho$ in $[0,1]$. Consequently, it is enough to prove that $U_{m-2}(\rho)\leq \rho^x\sum_{i=0}^{x}\binom{x}{i}(1-\rho)^iU_{m-2-x-i}(\rho)$. 
As $m-2-2x\geq 5$, $H_{n,m-2}$ is the union of $H_{n,m-2-2x}$ and $x$ edge-disjoint $2$-paths joining $s$ and $t$. Let $A$ 
be the vertex set in $H_{n,m-2}$ 
given by $\{v_3,v_4,\ldots,v_{2+x}\}$. Observe that $A$ has precisely $x$ vertices. Let us consider the 
presence or absence of the $2x$ edges in $sA \cup tA$ 
after each of the edges in $H_{n,m}$ fail with identical probability $\rho$. The probability that precisely $i$ edges in $sA$ do not fail equals $\binom{x}{i}(1-\rho)^i\rho^{x-i}$, and if those edges are $sB$ for some vertex set $B\in \binom{A}{i}$, then the probability that each of the edges in $tB$ fail equals $\rho^i$. 
Therefore,
\begin{equation*}
U_{m-2}(\rho)=\sum_{i=0}^{x}\binom{x}{i}(1-\rho)^i\rho^{x-i}\rho^iU_{m-2-x}(\rho)
\leq \rho^x \sum_{i=0}^{x}\binom{x}{i}(1-\rho)^iU_{m-2-x-i}(\rho). \qedhere
\end{equation*}   

\end{enumerate}
\end{proof}

\begin{theorem}\label{theorem:3-UMRTTGs}
For each pair of integer $n$ and $m$ such that $n\geq 6$ and $5\leq m\leq 2n-3$, the only $3$-UMRTTG in $T_{n,m}$ is $H_{n,m}$. 
\end{theorem}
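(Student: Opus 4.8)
The plan is to combine the two-step strategy announced before the theorem with the recursive estimates of Lemma~\ref{lemma:Um}. First I would record that $H_{n,m}$ itself lies in $T_{n,m}^*$: inside $A_{n',r}$ both terminals are universal in the component they span, so $s$ and $t$ are true twins, and every edge sits on a path of length at most $3$ joining $s$ and $t$ (the terminal edges are $1$- or $2$-paths, and the single chord $v_3v_4$ present when $m$ is even lies on the $3$-path $sv_3v_4t$), so $H_{n,m}$ has no $3$-irrelevant edges. By Lemma~\ref{lemma:conditions}, every $G\in T_{n,m}-T_{n,m}^*$ is strictly dominated coefficientwise by some member of $T_{n,m}^*$ and hence, through equation~\eqref{equation:poly}, has strictly smaller $3$-constrained reliability on $(0,1)$; therefore it suffices to prove that $H_{n,m}$ strictly beats every other graph of $T_{n,m}^*$.

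Next I would make the structure of $T_{n,m}^*$ explicit. Since $s$ and $t$ are true twins they are adjacent and share a common neighbourhood $W$ of size $k$, each vertex of $W$ joined to both terminals; and since no edge may be $3$-irrelevant, every remaining edge has both endpoints in $W$ (an edge touching a non-neighbour of the terminals cannot sit on a $\le 3$-path). Thus a member of $T_{n,m}^*$ is determined by $k$ together with an internal graph $F$ on $W$ having $f=m-1-2k$ edges, subject to $0\le f\le \binom{k}{2}$ and $k\le n-2$, the isolated vertices only padding the count to $n$. In this description $H_{n,m}$ is exactly the member with the fewest internal edges: $f=0$ and $k=(m-1)/2$ when $m$ is odd, and $f=1$ and $k=(m-2)/2$ when $m$ is even, while every other member has at least two further internal edges and correspondingly fewer vertex-disjoint $2$-paths. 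Because $st$ is a $1$-path sharing no edge with any $2$- or $3$-path, independence factors the unreliability as $U^3_G(\rho)=\rho\,Q_G(\rho)$, where $Q_G$ is the probability that none of the $2$- and $3$-paths through $W$ survives; so the theorem reduces to showing $Q_G(\rho)>Q_{H_{n,m}}(\rho)$ for every $G\in T_{n,m}^*-\{H_{n,m}\}$ and $\rho\in(0,1)$.

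For the core comparison I would induct on $m$, the base cases $5\le m\le 8$ (where one checks directly that $T_{n,m}^*=\{H_{n,m}\}$) being immediate. Given a competitor $G$ with $f\ge 2$, I would fix a suitable set of $x$ vertices of $W$ carrying the extra internal edges and condition on the survival pattern of the $s$- and $t$-edges at those vertices. In every pattern the internal edges can help only through a $3$-path whose two endpoints have, respectively, their $s$-edge and their $t$-edge up, so each conditioned residual is a gadget on strictly fewer edges whose unreliability, by the induction hypothesis together with the monotonicity Lemma~\ref{lemma:Um}\ref{it2} and the peeling recursion Lemma~\ref{lemma:Um}\ref{it1}, is at least the matching value $U_{m'}$. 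Summing these contributions with the binomial weights produced by the conditioning yields exactly the right-hand side of Lemma~\ref{lemma:Um}\ref{it3} as a lower bound for $U_G$, and that bound is in turn at least $U_m$; tracking the unique equality case then makes the inequality strict on $(0,1)$ precisely unless $G=H_{n,m}$. Equivalently, one may run an exchange argument, repeatedly deleting two internal edges and attaching a fresh common neighbour with its two terminal edges so as to drive $G$ toward $H_{n,m}$ at constant edge count, and show each move strictly lowers the unreliability.

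The main obstacle is precisely this conditioning/exchange step. The internal edges share their endpoints with $2$-path vertices, so the event that a $3$-path survives is entangled with the survival of the neighbouring $2$-paths and the probabilities do not factor; the role of Lemma~\ref{lemma:Um}\ref{it3} is to quarantine the internal edges by conditioning on the terminal-edge pattern at their endpoints and to absorb the resulting cross terms into the pure-$2$-path count. Getting the bookkeeping of the binomial weights and the induction indices to land exactly on the right-hand side of Lemma~\ref{lemma:Um}\ref{it3}, and isolating the single equality case $G=H_{n,m}$ to secure strictness on all of $(0,1)$, is where the real work lies; the parity split between even and odd $m$ (the lone chord of $H_{n,m}$ for even $m$) must also be carried through the induction.
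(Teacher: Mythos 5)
Your overall strategy coincides with the paper's: reduce to $T_{n,m}^*$ via Lemma~\ref{lemma:conditions}, then induct on $m$ and use a conditioning argument designed to land on the recursive bound of Lemma~\ref{lemma:Um}\ref{it3}. The structural description of $T_{n,m}^*$ (terminals true twins with common neighbourhood $W$, all remaining edges inside $W$, and $H_{n,m}$ the unique member with fewest internal edges) is correct. But the inductive step, which you yourself flag as ``where the real work lies,'' is not carried out, and the plan as stated would not produce the needed bound. The right-hand side of Lemma~\ref{lemma:Um}\ref{it3} is tailored to conditioning at a \emph{single} nonterminal vertex $v$, chosen of \emph{minimum} degree $x$ in $\hat{G}$: one first splits on the pair $sv,vt$ (yielding the terms $0\cdot(1-\rho)^2$, $\rho^2U^3_{G-v}(\rho)$, and $2\rho(1-\rho)(\cdots)$), and only then, in the mixed case, on which of the $x$ internal edges at $v$ survive, which generates the binomial sum $\sum_i\binom{x}{i}(1-\rho)^i\rho^{x-i}\rho^i\,U^3_{G-v-tA}(\rho)$. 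Conditioning instead on the terminal-edge pattern at the whole set of vertices carrying internal edges, as you propose, gives a different and more entangled expression that does not match the lemma. Moreover, Lemma~\ref{lemma:Um}\ref{it3} may only be invoked when $x\le\frac{m-7}{2}$, and this has to be \emph{proved} for the chosen vertex; the paper does so by the handshaking lemma, $2m\ge 2(\ell+1)+\ell(x+2)\ge 4x+18$ using that the terminals have $\ell\ge 4$ common neighbours, an argument that needs $m\ge 11$. Your proposal neither selects a minimum-degree vertex nor verifies any such degree bound.

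As a direct consequence, the cases $m=9$ and $m=10$ cannot be absorbed into your general step, yet your base cases stop at $m=8$. One has $T_{n,9}^*=\{K_5-e,H_{n,9}\}$ and $T_{n,10}^*=\{K_5,H_{n,10}\}$, and for $K_5$ every nonterminal vertex has internal degree $2>\frac{10-7}{2}$, so Lemma~\ref{lemma:Um}\ref{it3} is simply not applicable there; the paper disposes of these two competitors by direct computation. You also need the easy subcase in which the chosen vertex has internal degree $0$, where one peels off the disjoint $2$-path and applies Lemma~\ref{lemma:Um}\ref{it1} together with the induction hypothesis. Finally, the alternative ``exchange'' argument you mention (deleting two internal edges and attaching a fresh common neighbour) is not what the paper does, and proving that each such exchange lowers the unreliability for \emph{every} $\rho\in(0,1)$ is essentially the same difficulty you have deferred, not a way around it.
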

\begin{proof}
Let $n$ be a fixed integer such that $n\geq 6$. We will employ strong induction over $m\in \{5,6,\ldots,2n-3\}$ to prove the statement. Fix $m$ in $\{5,6,\ldots,2n-3\}$ and assume that $H_{n,m'}$ is the only $3$-UMRTTG in $T_{n,m'}$ for each $m'$ in $\{5,6,\ldots,2n-3\}$ such that $m'<m$. We will prove that $H_{n,m}$ is the only $3$-UMRTTG in $T_{n,m}$. If $m\in \{5,6,7,8\}$ then $H_{n,m}$ the only two-terminal  graph in $T_{n,m}^*$ thus the result follows from Lemma~\ref{lemma:conditions}. If $m=9$ then $T_{n,m}^*=\{K_5-e,H_{n,9}\}$ for some edge $e$ in $K_5$ whose endpoints are not terminals, and if $m=10$ then $T_{n,m}^*=\{K_5,H_{n,10}\}$. It is a simple exercise to prove that $U_9(\rho)<U_{K_5-e}^3(\rho)$ and $U_{10}(\rho)<U_{K_5}^3(\rho)$ 
for every $\rho \in (0,1)$. If $m\geq 11$ then, by Lemma~\ref{lemma:conditions}, it is enough to prove that $U_{m}(\rho)< U_G^3(\rho)$ for each $G$ in $T_{n,m}^*$ and for every $\rho$ in $(0,1)$. 
Among all nonisolated vertices in $G$, let us choose one vertex $v$ with minimum degree in $\hat{G}$. 
If $deg_{\hat{G}}(v)=0$ then the only neighbors of $v$ in $G$ are $s$ and $t$. In this case we can see that the $2$-path consisting of the edges $sv$ and $vt$ either fails or not, so $U_G^3(\rho)=(1-(1-\rho)^2)U_{G-sv-vt}^3(\rho)$, and by the inductive assumption $U_{G-sv-vt}^3(\rho)>U_{m-2}(\rho)$ for every $\rho$ in $(0,1)$. In this case, $U_{G}^3(\rho)=(1-(1-\rho)^2)U_{G-sv-vt}^3(\rho)>(1-(1-\rho)^2)U_{m-2}(\rho)=U_m(\rho)$ for every $\rho$ in $(0,1)$, where we used Lemma~\ref{lemma:Um}\ref{it1} in the last equality.

Now, if $deg_{\hat{G}}(v)\geq 1$ then we define $X=N_{\hat{G}}(v)$ and $x=|X|$. If both edges $sv$ and $vt$ do not fail (event with probability $(1-\rho)^2$) then there exists a $2$-path joining $s$ and $t$ in $G$ and the $3$-constrained two-terminal unreliability of the resulting subgraph is equal to $0$. If both edges $sv$ and $vt$ fail (event with probability $\rho^2$) then the $3$-constrained two-terminal unreliability of the resulting subgraph is identical to that of $G-v$.  
Otherwise, precisely one of the edges $sv$ or $vt$ does not fail (event with probability $2\rho(1-\rho)$). Without loss of generality, assume that $sv$ is the edge that does not fail. In this case, let us condition on the presence or absence of the edges belonging to $vX$. 
Let $i\in \{0,1,\ldots,x\}$. For each $A \in \binom{X}{i}$, if all edges in $sA$ do not fail and all edges in $s(X-A)$ fail (event with probability $(1-\rho)^i\rho^{x-i}$), then the $3$-constrained two-terminal unreliability of the resulting subgraph equals the product of the probability that all edges in $tA$ fail (event with probability $\rho^i$) and the $3$-constrained two-terminal unreliability of $G-v-tA$. Therefore,
\begin{equation*}
U_G^3(\rho) = 0(1-\rho)^2 + \rho^2U_{G-v}^3(\rho)+
2\rho(1-\rho)\sum_{i=0}^{x}\sum_{A \in \binom{X}{i}}(1-\rho)^i \rho^{x-i}\rho^iU_{G-v-tA}^3(\rho).
\end{equation*}
As $G-v$ has precisely $m-2-x$ edges and $|A|=i$, the graph $G-v-tA$ has precisely $m-2-x-i$ edges. In the following, we will use our inductive assumption and Lemma~\ref{lemma:Um}\ref{it3}. First, we need to prove that $x\leq \frac{m-7}{2}$. Let $\ell$ be the number of common neighbors between $s$ and $t$. As $m\geq 11 > \binom{5}{2}$ and $G$ is in $T_{n,m}^{*}$, the connected component including $s$ has at least $6$ vertices thus $\ell \geq 4$. Consequently, $deg_G(s)=deg_G(t)=\ell+1\geq 5$. As $x+2$ equals the minimum degree among all nonisolated nonterminal vertices in $G$, there are at least $\ell$ vertices in $G$ whose degree is at least $x+2$. The handshaking lemma applied to $G$ yields 
\begin{equation*}
2m = deg_G(s)+deg_G(t)+\sum_{i=3}^{n}deg_G(v_i)\geq 2(\ell+1)+\ell(x+2)\geq 4x+18,    
\end{equation*}
where the last step holds since $\ell\geq 4$. Dividing by $2$ we obtain that $m\geq 2x+9$, which implies that $x\leq \frac{m-9}{2} \leq \frac{m-7}{2}$, as required. 
Finally, using the inductive assumption we obtain that 
\begin{align*}
U_G^3(\rho) &= \rho^2U_{G-v}^3(\rho)+2\rho(1-\rho)\sum_{i=0}^{x}\sum_{A \in \binom{X}{i}}(1-\rho)^i\rho^{x-i}\rho^iU_{G-v-At}^3(p)\\
         &\geq \rho^2U_{m-x-2}(\rho)+2(1-\rho)\rho^{x+1}\sum_{i=0}^{x}\binom{x}{i}(1-\rho)^iU_{m-x-i-2} \geq U_m(\rho),
\end{align*}
where the last inequality holds by Lemma~\ref{lemma:Um}\ref{it3}. The theorem follows.
\end{proof}

The decision of existence or nonexistence of 3-UMRTTGs in the cases not covered by Theorem~\ref{theorem:3-UMRTTGs} is still unexplored. In the remaining of this section we give properties shared by all $d$-LMRTTGs near $0$ when $d\geq 3$,  $n\geq 6$ and $3n-5 \leq m \leq \binom{n}{2}$. The analysis is analogous to the determination of LMRTTGs near $0$ in $T_{n,m}$ given by Xie et al. in~\cite{2021Xie}.

Let $G$ be in $T_{n,m}$. For each $i\in \{1,2,\ldots,m\}$ and each positive integer $d$ we define $B_i^d(G)$ as $N_{m-i}^d(G)$. 
Let $\lambda_{st}^d(G)$ be the least positive integer $j$ 
such that $B_j^d(G)$ is positive. A \emph{trivial $d$-cutset} 
is an edge set $U$ of $G$ such that $G-U$ has no $d$-pathset and further 
$U$ includes all edges incident to $s$ or to $t$.

\begin{lemma}\label{lemma:minimize:n-2}
Let $d$, $n$ and $m$ be integers such that $d\geq 3$, $n\geq 6$, and $3n-5 \leq m\leq \binom{n}{2}$. For each $G$ in $T_{n,m}$ it holds that   $\lambda_{st}^d(G)\leq n-1$, and the equality is attained if and only if 
$G$ is in $T_{n,m}^u$. 
\end{lemma}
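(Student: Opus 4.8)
The plan is to use that $\lambda_{st}^d(G)$ is the minimum size of a $d$-cutset of $G$: deleting a set of $j$ edges produces a subgraph with $m-j$ edges that fails to be a $d$-pathset exactly when those $j$ edges destroy every path of length at most $d$ joining $s$ and $t$, so $\lambda_{st}^d(G)$ is the least $j$ for which a $d$-cutset of size $j$ exists. With this reading the upper bound is immediate: deleting all $deg_G(s)$ edges incident to $s$ isolates $s$, so the resulting subgraph has no $s$--$t$ path at all, hence no path of length at most $d$; thus the edges at $s$ form a $d$-cutset and $\lambda_{st}^d(G)\le deg_G(s)$, and symmetrically $\lambda_{st}^d(G)\le deg_G(t)$. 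Since no vertex has degree exceeding $n-1$, this already gives $\lambda_{st}^d(G)\le\min\{deg_G(s),deg_G(t)\}\le n-1$.

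For the backward implication I would argue by contraposition. If $G\notin T_{n,m}^u$ then some terminal, say $s$, is not universal, so $deg_G(s)\le n-2$; the preceding estimate then yields $\lambda_{st}^d(G)\le n-2<n-1$. Hence $\lambda_{st}^d(G)=n-1$ forces both terminals to be universal, that is $G\in T_{n,m}^u$.

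For the forward implication, assume $G\in T_{n,m}^u$. The upper bound gives $\lambda_{st}^d(G)\le deg_G(s)=n-1$, so it suffices to show that every $d$-cutset $U$ has at least $n-1$ edges. As $d\ge 1$ and both terminals are universal, $st\in E(G)$ is itself a path of length at most $d$, so $st\in U$. Moreover each of the $n-2$ vertices $v\in V(G)\setminus\{s,t\}$ is a common neighbor of both terminals, so $s,v,t$ is a $2$-path joining them; since $d\ge 2$ this path must be cut, forcing $sv\in U$ or $vt\in U$. Choosing for each such $v$ one edge of $\{sv,vt\}\cap U$ gives $n-2$ edges that are pairwise distinct (the non-terminal endpoint recovers $v$) and distinct from $st$, whence $|U|\ge 1+(n-2)=n-1$. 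Combined with the upper bound this gives $\lambda_{st}^d(G)=n-1$.

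The only genuinely delicate point is the lower bound in the forward step: one must verify that the cut-edges selected for distinct common neighbors are distinct and distinct from $st$, which is exactly what turns the bound into $\ge n-1$ rather than merely $\ge n-2$. Everything else reduces to a degree count. I expect the hypothesis $d\ge 3$ to enter only through $d\ge 2$ (so that the $2$-paths $s,v,t$ are relevant), and the density range $3n-5\le m\le\binom{n}{2}$ to be inherited from the surrounding analysis rather than essential here, serving only to place us in a regime where $T_{n,m}^u\ne\emptyset$ so that the characterization is not vacuous.
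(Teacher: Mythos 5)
Your proof is correct and follows essentially the same route as the paper: the upper bound and the backward implication come from the trivial $d$-cutset consisting of all edges at a (non-universal) terminal, and the lower bound for $G\in T_{n,m}^u$ comes from the $n-1$ pairwise edge-disjoint paths of length at most $2$ (the edge $st$ and the $2$-paths $s,v,t$), each of which any $d$-cutset must meet. The only cosmetic difference is that the paper invokes Menger's theorem where you count the disjoint short paths directly, and your closing observation that only $d\ge 2$ (and universality of the terminals, not the full density range) is actually used here is accurate.
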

\begin{proof}
Let $n$, $m$ and $d$ be as in the statement, and let $G$ be any two-terminal graph in $T_{n,m}$. If $G$ is in $T_{n,m}^u$ then 
$G$ has $st$ as well as precisely $n-2$ edge-disjoint $2$-paths joining $s$ and $t$. By Menger theorem, $\lambda(G) = n-1$. 
As each of the previous $n-1$ edge-disjoint paths have length $1$ or $2$ and $d\geq 3$, it follows that $\lambda_{st}^d(G)=n-1$.  
If $G$ does not belong to $T_{n,m}^u$ then $G$ has a trivial $d$-cutset 
composed by at most $n-2$ edges thus $B_{n-2}^d(G)>0$ and $\lambda_{st}^d(G)\leq n-2$. 
The lemma follows.
\end{proof}

\begin{lemma}\label{lemma:minimize:n-2+k}
Let $d$, $n$ and $m$ be integers such that $d\geq 3$, $n\geq 6$, and $3n-5\leq m\leq \binom{n}{2}$. Let $G$ be in $T_{n,m}^u$. 
Then, for each 
$i\in \{1,\ldots,\lambda(\hat{G})\}$ it holds that $B_{n-2+i}^d(G)= 2\binom{m-n+1}{i-1}$ and $B_{n-1+\lambda(\hat{G})}>2\binom{m-n+1}{\lambda(\hat{G})}$. 
\end{lemma}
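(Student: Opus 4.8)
The plan is to work with $B_k^d(G)$ as the number of $d$-cutsets of $G$ with exactly $k$ edges — the $k$-element edge sets whose deletion leaves no $st$-path of length at most $d$ — this being the reading under which Lemma~\ref{lemma:minimize:n-2} holds and under which $\lambda_{st}^d(G)=n-1$ records that the stars at $s$ and at $t$ are the unique minimum $d$-cutsets of a graph in $T_{n,m}^u$. For $G\in T_{n,m}^u$ I would describe an arbitrary $d$-cutset $S$ by its trace on the terminal edges. Since $st$ and every $2$-path $sv_it$ must be destroyed, $st\in S$ and each non-terminal vertex $v_i$ lies in exactly one of $X$ (only $sv_i\in S$), $Y$ (only $v_it\in S$), or $Z$ (both in $S$); writing $e_S=|S\cap E(\hat G)|$ and counting edges yields the fundamental identity $|S|=(n-1)+|Z|+e_S$. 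Thus a $d$-cutset of size $n-2+i$ has ``excess'' $|Z|+e_S=i-1$, and one checks that $S$ contains the full star at $s$ exactly when $Y=\emptyset$ and the full star at $t$ exactly when $X=\emptyset$. In $G-S$ the neighbours of $s$ are precisely $Y$ and those of $t$ are precisely $X$, so a surviving short $st$-path corresponds to a short path from $Y$ to $X$ inside $\hat G-(S\cap E(\hat G))$.

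The crux is the structural claim: if $S$ contains neither full star, i.e.\ $X\neq\emptyset$ and $Y\neq\emptyset$, then $|Z|+e_S\geq\lambda(\hat G)$. I would prove this by extracting from $S$ an edge cut of $\hat G$, of size at most $|Z|+e_S$, that separates the nonempty sets $X$ and $Y$; since $\hat G$ is connected (otherwise $\lambda(\hat G)=0$ and the claim is vacuous) any such cut has at least $\lambda(\hat G)$ edges. The idea is that after deleting $S\cap E(\hat G)$ the forbidden short $Y$-to-$X$ paths force the frontier between the part reachable from $Y$ and the part containing $X$ to be crossed only by edges of $S\cap E(\hat G)$ or edges incident to the doubly-cut vertices $Z$, bounding the crossing set by $|Z|+e_S$. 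This step is where I expect all the difficulty to lie: the frontier/reachability argument is transparent when \emph{every} $Y$-$X$ path is killed, but for a finite bound only paths of length at most $d$ are destroyed, so long detours — in particular detours routed through $Z$ — must be excluded by a careful analysis of intermediate vertices and path lengths, and it is exactly this analysis that singles out edge-connectivity $\lambda(\hat G)$ as the governing invariant.

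Granting the structural claim, part (i) is a count. For $1\le i\le\lambda(\hat G)$ the excess $i-1$ is strictly below $\lambda(\hat G)$, so every $d$-cutset of size $n-2+i$ must contain a full star. A cutset containing the $s$-star consists of that star ($n-1$ edges) together with any $i-1$ of the remaining $m-(n-1)=m-n+1$ edges, and symmetrically for the $t$-star; such a set automatically destroys all $st$-paths because the star already isolates a terminal. Since $\lambda(\hat G)\le n-3$ we have $n-2+i\le 2n-5<2n-3$, so no set of this size can contain both stars, the two families are disjoint, and hence $B_{n-2+i}^d(G)=2\binom{m-n+1}{i-1}$. For part (ii), at size $n-1+\lambda(\hat G)=n-2+(\lambda(\hat G)+1)\le 2n-4<2n-3$ the star-based cutsets again number exactly $2\binom{m-n+1}{\lambda(\hat G)}$, but a genuinely new cutset now appears: take a minimum edge cut of $\hat G$ partitioning $V(\hat G)$ into nonempty $P,Q$ with $\lambda(\hat G)$ crossing edges, and set $X=P$, $Y=Q$, $Z=\emptyset$, with $S\cap E(\hat G)$ the crossing edges. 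This $S$ disconnects $X$ from $Y$ inside $\hat G$, hence kills every $st$-path irrespective of $d$, has size $n-1+\lambda(\hat G)$, and contains neither star. Adding it to the star-based families gives the strict inequality $B_{n-1+\lambda(\hat G)}^d(G)>2\binom{m-n+1}{\lambda(\hat G)}$, completing the lemma.
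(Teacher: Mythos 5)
Your outline follows the paper's strategy for this lemma: count the two disjoint star-based families of trivial cutsets, rule out nontrivial cutsets below the threshold, and exhibit one extra nontrivial cutset built from a minimum edge cut of $\hat G$ at size $n-1+\lambda(\hat G)$; the bookkeeping in those parts (disjointness of the two families because $n-2+i<2n-3$, the count $\binom{m-n+1}{i-1}$ per star, the construction for part (ii)) is correct. The problem is the step you yourself flag as the crux: the claim that a $d$-cutset $S$ containing neither star satisfies $|Z|+e_S\geq\lambda(\hat G)$. You do not prove it --- you only gesture at a frontier argument and concede that detours through $Z$ ``must be excluded by a careful analysis'' --- and no such analysis can succeed, because the claim is false for $d=3$. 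Take $n=13$ and let $\hat G$ consist of two disjoint copies of $K_5$ together with one vertex $z$ adjacent to all ten other vertices; then $\lambda(\hat G)=5$ and $m=53$ lies in the required range. Let $S$ consist of $st$, $sz$, $tz$, the five edges from $s$ to one copy of $K_5$, and the five edges from $t$ to the other copy. Then $|S|=13=n-2+i$ with $i=2\leq\lambda(\hat G)$; in your notation $X$ and $Y$ are the two copies of $K_5$, $Z=\{z\}$, $e_S=0$, so $S$ contains neither star and $|Z|+e_S=1<\lambda(\hat G)$. Yet $G-S$ has no $st$-path of length at most $3$: every surviving path must cross from the $Y$-side to the $X$-side of $\hat G$, hence must pass through $z$, hence has length at least $4$. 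So $S$ is a nontrivial $3$-cutset of size $n-2+i$ and $B^3_{n-2+i}(G)>2\binom{m-n+1}{i-1}$. The invariant governing the $Z\neq\emptyset$ case is a mixed vertex--edge separation number of $\hat G$, which is bounded above by the vertex connectivity $\kappa(\hat G)$ and can be far smaller than $\lambda(\hat G)$.

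You should know that the paper's own proof has the same blind spot: it asserts that every nontrivial $d$-cutset has the form $\{st\}\cup sA\cup tA^c\cup U$ with $U\subseteq E(\hat G)$, which silently assumes no nonterminal vertex loses both of its terminal edges, i.e.\ $Z=\emptyset$. Under that restriction your frontier argument does close for $d=3$: a surviving $3$-path is exactly an edge of $\hat G-U$ joining $Y$ to $X$, so a nontrivial $3$-cutset with $Z=\emptyset$ must put all $e_{\hat G}(X,Y)\geq\lambda(\hat G)$ crossing edges into $U$, forcing $e_S\geq\lambda(\hat G)$. Your decomposition is therefore more honest than the paper's --- it exposes precisely the case the paper skips --- but neither argument handles $Z\neq\emptyset$, and the example above shows that this case genuinely breaks the stated equality. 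To salvage the statement one must either restrict attention to cutsets with $Z=\emptyset$ or replace $\lambda(\hat G)$ by the appropriate mixed connectivity parameter; as written, the lemma cannot be proved because it is not true.
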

\begin{proof}
Let $d$, $n$ and $m$ be as in the statement and let $G$ be in $T_{n,m}^u$. Let $i\in \{1,2,\ldots,\lambda(\hat{G})\}$. Each trivial $d$-cutset in $G$ composed by $n-2+i$ edges consists of all the $n-1$ incident edges of $s$ or $t$ plus $i-1$ extra edges, so $B_{n-2+i}^d(G)\geq 2\binom{m-n+1}{i-1}$. We will prove that $G$ has no nontrivial $d$-cutsets of cardinality $n-2+i$. 
In fact, a nontrivial $d$-cutset must remove $st$ as well as some nonempty edge set $sA$ where $A$ is strictly included in $V(\hat{G})$, plus all edges in $tA^c$ (otherwise we would have some $2$-path $sx,xt$ for some $x\in A^c$) and some edge set $U$ in $\hat{G}$. Specifically, if there exists some nontrivial $d$-cutset $U'$ in $G$ of cardinality $n-2+i$ then $U'=\{st\} \cup sA \cup tA^c \cup U$ for some edge set $U$ of $\hat{G}$ composed by precisely $i-1$ edges. As $|U|=i-1$ and $i-1 < \lambda(\hat{G})$, it follows that $\hat{G}-U$ is connected. Consequently, there exists some edge $xy$ in $G-U'$ such that $x\in A$ and $y\in A^c$, so $sy,yx,xt$ is a $3$-path included in $G-U'$. This means that $G-U'$ is not a $d$-cutset. Therefore, $G$ has no nontrivial $d$-cutsets, and $B_{n-2+i}^d(G)= 2\binom{m-n+1}{i-1}$.

Finally, if $i=\lambda(\hat{G})+1$ then there exists some edge set $U'$ of $\hat{G}$ composed by $i-1$ edges 
such that $\hat{G}-U'$ is not connected. Let $G_1$ be some connected component of $\hat{G}-U'$, $A=V(G_1)$ 
and set $U=\{st\} \cup sA \cup tA^c \cup U'$. It is clear that $U$ is a nontrivial $d$-cutset with $n-1+\lambda(\hat{G})$ edges thus $B_{n-1+\lambda(\hat{G})}>2\binom{m-n+1}{\lambda(\hat{G})}$. The lemma follows. 
\end{proof}

We close this section with a statement that gives a necessary condition for a two-terminal graph in $T_{n,m}$ to be a $d$-LMRTTG near $0$ 
when $d\geq 3$, $n\geq 6$ and $3n-5 \leq m \leq \binom{n}{2}$. 

\begin{proposition}\label{proposition:regularity}
Let $d$, $n$ and $m$ be integers such that $d\geq 3$, $n\geq 6$, and $3n-5 \leq m \leq \binom{n}{2}$. 
If $G$ is a $d$-LMRTTG near $0$ then $G$ is in $T_{n,m}^u$ and $\hat{G}$ is almost-regular.
\end{proposition}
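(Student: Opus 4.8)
The plan is to carry out the near-$0$ analysis already used for Theorem~\ref{theorem:Xie}\ref{teo-regular}, working throughout with the cutset counts $B_j^d$. Writing the unreliability as $U_G^d(\rho)=\sum_{j\geq 1}B_j^d(G)\,\rho^j(1-\rho)^{m-j}$, its behaviour as $\rho\to 0^{+}$ is governed by the coefficients $B_j^d(G)$ taken in increasing order of $j$: being a $d$-LMRTTG near $0$ forces $G$ to minimize this coefficient vector lexicographically, that is, first to make $\lambda_{st}^d(G)$ (the least $j$ with $B_j^d(G)>0$) as large as possible, and then to minimize $B_{\lambda_{st}^d(G)}^d(G),B_{\lambda_{st}^d(G)+1}^d(G),\dots$ in turn. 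Concretely, whenever two graphs agree on all $B_j^d$ below some index and the first differing one is strictly larger for $G$, the competitor has strictly larger reliability on an interval $(0,\delta)$; this is precisely the near-$0$ comparison of Remark~\ref{remark:local}\ref{it2:local}, translated from the $N_i^d$ to the equivalent $B_j^d$.

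First I would show $G\in T_{n,m}^u$. By Lemma~\ref{lemma:minimize:n-2} every graph satisfies $\lambda_{st}^d(G)\leq n-1$, with equality iff $G\in T_{n,m}^u$, and $T_{n,m}^u$ is nonempty because $3n-5\leq m\leq\binom{n}{2}$ guarantees both $2n-3\leq m$ and $m-(2n-3)\leq\binom{n-2}{2}$. Fix some $G^{\star}\in T_{n,m}^u$, for which $\lambda_{st}^d(G^{\star})=n-1$. If $G\notin T_{n,m}^u$ then $\lambda_{st}^d(G)\leq n-2$, so at $j=\lambda_{st}^d(G)$ we have $B_j^d(G)>0=B_j^d(G^{\star})$ while all earlier coefficients of both vanish; by the leading-term comparison, $R_{G^{\star}}^d(\rho)>R_G^d(\rho)$ on some $(0,\delta)$, contradicting that $G$ is a $d$-LMRTTG near $0$. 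Hence $G\in T_{n,m}^u$.

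Next I would show that, among graphs in $T_{n,m}^u$, a $d$-LMRTTG near $0$ maximizes the edge-connectivity $\lambda(\hat G)$. Let $G,G'\in T_{n,m}^u$ with $a:=\lambda(\hat G)<\lambda(\hat{G'})$. By Lemma~\ref{lemma:minimize:n-2+k}, for every $i\in\{1,\dots,a\}$ both graphs carry the common trivial-cutset value $B_{n-2+i}^d=2\binom{m-n+1}{i-1}$, so their coefficient vectors agree for all $j\leq n-2+a$; but at the index $j=n-1+a$ the same lemma gives $B_{n-1+a}^d(G)>2\binom{m-n+1}{a}$, whereas $B_{n-1+a}^d(G')=2\binom{m-n+1}{a}$ because $a+1\leq\lambda(\hat{G'})$ places this index in the range covered by the first part of the lemma applied to $G'$. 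Thus $G$ and $G'$ first differ at $j=n-1+a$, where $G$ is strictly larger, so $G$ is beaten near $0$ by $G'$. Consequently a $d$-LMRTTG near $0$ must attain the maximum of $\lambda(\hat G)$ over all graphs on $n-2$ vertices with $m-(2n-3)$ edges; this maximum equals $\lfloor 2(m-2n+3)/(n-2)\rfloor$ and is realized inside $T_{n,m}^u$. Finally, since $\hat G$ then has the greatest possible edge-connectivity, it is almost-regular by the result of Harary~\cite{1962Harary}, which completes the proof.

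I expect the main obstacle to be the third step: one must verify that the first index at which two universal-terminal graphs can differ is exactly the minimum-cut index $n-1+\lambda(\hat G)$, and that a larger $\lambda(\hat G)$ is unambiguously better there. This is where Lemma~\ref{lemma:minimize:n-2+k} does the real work, pinning every cutset count below that index to the fixed trivial values $2\binom{m-n+1}{i-1}$, independently of the internal structure of $\hat G$, so that the comparison reduces cleanly to $\lambda(\hat G)$. The only external input is the Harary implication that greatest edge-connectivity forces almost-regularity, which is used exactly as in the argument of Xie et al.
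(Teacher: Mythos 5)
Your proof is correct and follows essentially the same route as the paper: use Remark~\ref{remark:local}\ref{it2:local} to reduce the near-$0$ comparison to the lexicographic order on the cutset counts, invoke Lemma~\ref{lemma:minimize:n-2} to force $G\in T_{n,m}^u$, invoke Lemma~\ref{lemma:minimize:n-2+k} to force $\lambda(\hat{G})$ to be maximum, and conclude via Harary's result. The only difference is that you spell out the pairwise comparison within $T_{n,m}^u$ (and the nonemptiness of $T_{n,m}^u$) in more detail than the paper does, which is a welcome clarification rather than a deviation.
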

\begin{proof}
Let $G$ be a $d$-LMRTTG near $0$ in $T_{n,m}$. 
By Remark~\ref{remark:local}\ref{it2:local} we know that $\lambda_{st}^d(G)$ must be maximum. By Lemma~\ref{lemma:minimize:n-2} we know that $\lambda_{st}^d(G)\leq n-1$ and 
the equality is met if and only if $G$ is in $T_{n,m}^u$. 
As $G$ is in $T_{n,m}^u$ and $G$ is a $d$-LMRTTG near $0$, Lemma~\ref{lemma:minimize:n-2+k} gives that $\lambda(\hat{G})$ must be maximum. It is known~\cite{1962Harary} that each simple graph with maximum connectivity is almost-regular. The proposition follows.
\end{proof}

\section{Nonexistence of d-UMRTTGs when d is greater than 3}\label{section:contrast}
We will show in this section that the nonexistence results stated in 
Section~\ref{section:background} are inherited when the distance constraint $d$ is greater than $3$. The key is Lemma~\ref{lemma:inherit} and Lemma~\ref{lemma:distances}, whose proofs are elementary.
\begin{lemma}\label{lemma:inherit}
Let $d$ be any positive integer and let $G$ be any two-terminal graph in $T_{n,m}$ such that the length of each of its paths joining its terminals is at most $d$. Then, $R_G^d(\rho)=R_G(\rho)$ for every $\rho$ in $[0,1]$. 
\end{lemma}
\begin{proof}
Let $G$ be any two-terminal graph satisfying the conditions of the statement, and let $i$ be any integer in $\{1,2,\ldots,m\}$. It is enough to prove that $N_i^d(G)=N_i(G)$. By definition we know that 
$N_i^d(G)\leq N_i(G)$. Consider any spanning subgraph $H$ of $G$ composed by $i$ edges such that there exists some path in $H$ joining the terminals of $G$. By our assumption, the length of such a path is at most $d$ thus $H$ is also a $d$-pathset of $G$ and $N_i^d(G)\geq N_i(G)$. Therefore, $N_i^d(G)=N_i(G)$, and the lemma follows. 
\end{proof}

\begin{lemma}\label{lemma:distances}
Let $G$ be any two-terminal graph in $T_{n,m}$ and let $d$ be any positive integer. Then, $N_i^d(G)=N_i(G)$ for each $i\in \{1,2,\ldots,d\}$.    
\end{lemma}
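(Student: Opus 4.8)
The plan is to prove the equality by comparing, for a fixed number of edges $i$, the condition defining a $d$-pathset against the condition defining an (unconstrained) pathset on one and the same $i$-edge spanning subgraph, and to exploit that a subgraph with few edges cannot contain a long path.

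First I would record the easy inequality $N_i^d(G)\le N_i(G)$, which holds for every $i$: a $d$-pathset is in particular a pathset, since a path of length at most $d$ is certainly a path joining the terminals, so every spanning subgraph counted by $N_i^d(G)$ is also counted by $N_i(G)$.

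For the reverse inequality, which is where the hypothesis $i\le d$ is used, I would take an arbitrary spanning subgraph $H$ of $G$ with exactly $i$ edges that contains some path $P$ joining $s$ and $t$, and argue that such a $P$ is automatically short. The key observation is that the edges of $P$ are distinct edges of $H$, so the length of $P$, that is its number of edges, is at most the total number $i$ of edges of $H$. Since $i\le d$, the length of $P$ is at most $d$, so $H$ already contains a path of length at most $d$ joining the terminals and is therefore a $d$-pathset. Hence every spanning subgraph counted by $N_i(G)$ is also counted by $N_i^d(G)$, which gives $N_i(G)\le N_i^d(G)$ for each $i\in\{1,2,\ldots,d\}$.

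Combining the two inequalities yields $N_i^d(G)=N_i(G)$ for each $i\in\{1,2,\ldots,d\}$, as claimed. I do not expect any genuine obstacle in this argument; the only point requiring care is the elementary remark that a path contained in $H$ cannot use more edges than $H$ possesses in total, which is exactly what bounds its length by $i$ and lets the constraint $i\le d$ do its work.
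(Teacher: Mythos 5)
Your proposal is correct and follows essentially the same route as the paper: the trivial inequality $N_i^d(G)\le N_i(G)$ in one direction, and in the other the observation that any $s$--$t$ path inside an $i$-edge spanning subgraph has at most $i\le d$ edges, so the subgraph is already a $d$-pathset. No differences worth noting.
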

\begin{proof}
Consider a two-terminal graph $G$ in $T_{n,m}$ and positive integers $d$ and $i$ such that $i\in \{1,2,\ldots,d\}$. 
By definition, we know that $N_i^d(G)\leq N_i(G)$. Let $H$ by any spanning subgraph of $G$ composed by $i$ edges 
joining its terminals. Clearly, each path in $H$ joining its terminals has at most $|E(H)|$ edges that is precisely $i$ edges thus $N_i^d(G)\geq N_i(G)$. 
Therefore, $N_i^d(G)=N_i(G)$, and the lemma follows.
\end{proof}

\begin{lemma}\label{lemma:d4}
Let $d$, $n$ and $m$ be integers such that $d\geq 4$, $n\geq 5$, and $m\geq 5$. All the following assertions hold:
\begin{enumerate}[label=(\roman*)]
    \item\label{it1:low} If $m\leq 2n-3$ then $H_{n,m}$ is the only $d$-LMRTTG near $1$ in $T_{n,m}$.
    \item\label{it2:medium} If $2n-3<m\leq 3n-6$ then $A_{n,m-2n+3}$ is the only $d$-LMRTTG near $1$ in $T_{n,m}$.
    \item\label{it3:high} If $3n-5\leq m\leq \binom{n}{2}-2$ and $G$ is a $d$-LMRTTG near $1$ in $T_{n,m}$ then $\hat{G}$ is not almost-regular. 
\end{enumerate}
\end{lemma}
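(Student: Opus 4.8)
The plan is to reduce each assertion to the corresponding unconstrained ($d\geq n-1$) result already recorded in Lemma~\ref{lemma:LMRTTG1} and in the exposition of Theorem~\ref{theorem:Xie}, exploiting two facts. The first is that a graph all of whose terminal paths are short sees no difference between $R_G^d$ and $R_G$ (Lemma~\ref{lemma:inherit}). The second is that the low-order coefficients are never affected by the distance constraint (Lemma~\ref{lemma:distances}, giving $N_i^d=N_i$ for $i\leq d$, hence for $i\in\{1,2,3,4\}$ since $d\geq 4$).

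For \ref{it1:low} and \ref{it2:medium}, write $G^\star$ for the candidate graph, namely $H_{n,m}$ or $A_{n,m-2n+3}$. First I would check directly from the definitions that every path joining the terminals of $G^\star$ has length at most $4$: in $H_{n,m}$ the only nonterminal edge, if present, is a single $v_3v_4$, so terminal paths have length at most $3$; in $A_{n,m-2n+3}$ the nonterminal edges form a star centered at $v_3$, whose longest path has length $2$, so terminal paths have length at most $4$. Since $d\geq 4$, Lemma~\ref{lemma:inherit} yields $N_i^d(G^\star)=N_i(G^\star)$ for all $i$. Now fix any $H\neq G^\star$ in $T_{n,m}$. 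By Lemma~\ref{lemma:LMRTTG1}, $G^\star$ is the \emph{unique} unconstrained LMRTTG near $1$, so the sequence $(N_i(G^\star))_i$ is lexicographically strictly larger than $(N_i(H))_i$: at the first index $k$ with $N_k(G^\star)\neq N_k(H)$ one has $N_i(G^\star)=N_i(H)$ for $i<k$ and $N_k(G^\star)>N_k(H)$. Combining $N_i^d(G^\star)=N_i(G^\star)$ with the trivial bound $N_i^d(H)\leq N_i(H)$ gives $N_i^d(G^\star)\geq N_i^d(H)$ for all $i\leq k$ and $N_k^d(G^\star)>N_k^d(H)$, so the first index at which the $d$-sequences differ favors $G^\star$. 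By Remark~\ref{remark:local}\ref{it1:local}, $R_{G^\star}^d(\rho)>R_H^d(\rho)$ for $\rho$ near $1$, for every $H\neq G^\star$; hence $G^\star$ is the only $d$-LMRTTG near $1$.

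For \ref{it3:high} I would instead transport the refinement construction of Lemma~\ref{lemma:existenceLMRTTG}. Since $d\geq 4$, Lemma~\ref{lemma:distances} gives $N_i^d=N_i$ on all of $T_{n,m}$ for $i\in\{1,2,3,4\}$, so the first four steps of the $d$-construction coincide with the unconstrained one: by induction $T_{n,m}^d(j)=T_{n,m}(j)$ for $j\leq 4$, and in particular $T_{n,m}^d(4)=T_{n,m}(4)$. The exposition following Theorem~\ref{theorem:Xie} establishes that, in the range $3n-6<m\leq\binom{n}{2}-2$ (which contains $3n-5\leq m\leq\binom{n}{2}-2$), every member of $T_{n,m}(4)$ lies in $T_{n,m}^u$ and has $\hat{G}$ not almost-regular, because maximizing $N_4$ over $T_{n,m}(3)=T_{n,m}^u$ amounts to maximizing $p_3(\hat{G})$, whose maximizers are never almost-regular by Byer's classification. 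Any $d$-LMRTTG near $1$ lies in $T_{n,m}^d(m)\subseteq T_{n,m}^d(4)=T_{n,m}(4)$, so its $\hat{G}$ is not almost-regular.

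The main obstacle is twofold. For \ref{it1:low}--\ref{it2:medium} the delicate point is the path-length audit of $G^\star$: one must be certain no terminal path exceeds length $d$, which is exactly where $d\geq 4$ is used and where a miscount—for instance overlooking the length-$4$ paths through the star in $A_{n,r}$—would invalidate Lemma~\ref{lemma:inherit}. For \ref{it3:high} the subtlety is that the conclusion is only structural and rests entirely on the prior $N_4$ computation together with Byer's theorem; one must verify that the range and size hypotheses under which that computation is valid (in particular the requirement $n\geq 11$ carried by Theorem~\ref{theorem:Xie}) align with the present ones, and that the first four refinement steps genuinely agree, rather than trying to pin down a single extremal graph as in the lower ranges.
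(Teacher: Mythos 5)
Your proof is correct and follows essentially the same route as the paper: all three parts reduce to the unconstrained results via Lemma~\ref{lemma:inherit} (every terminal path of the candidate graph has length at most $4\leq d$) and Lemma~\ref{lemma:distances} (the coefficients $N_i^d$ with $i\leq 4$ are unaffected by the constraint), and part~(iii) is argued identically. The only cosmetic difference is that for parts~(i)--(ii) you compare the coefficient sequences lexicographically and invoke Remark~\ref{remark:local}, whereas the paper argues directly at the polynomial level via the chain $R^d_{G^\star}(\rho)=R_{G^\star}(\rho)>R_H(\rho)\geq R^d_H(\rho)$ for $\rho$ near $1$; both are valid.
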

\begin{proof}
Let us proof each of the assertions separately.
\begin{enumerate}[label=(\roman*)]
    \item Let $G$ be any two-terminal graph in $T_{n,m}$ that is not isomorphic to $H_{n,m}$. 
    By Lemma~\ref{lemma:LMRTTG1} we know that $H_{n,m}$ is the only LMRTTG near $1$ in $T_{n,m}$. By Remark~\ref{remark:uniqueness}, there exists $\delta>0$ such that for every $\rho$ in $(1-\delta,1)$ it holds that $R_{H_{n,m}}(\rho)>R_G(\rho)$. Observe that the length of each path joining $s$ and $t$ in $H_{n,m}$ is at most $3$. As $d\geq 4$, Lemma~\ref{lemma:inherit} gives that $R_{H_{n,m}}^d(\rho)=R_{H_{n,m}}(\rho)$ for every $\rho$ in $[0,1]$. Clearly, $R_G(\rho)\geq R_G^d(\rho)$ for every $\rho$ in $[0,1]$. Then, $R_{H_{n,m}}^d(\rho)=R_{H_{n,m}}(\rho)>R_G(\rho)\geq R_G^d(\rho)$ for every $\rho$ in $(1-\delta,1)$, as required.  
    \item The length of each path joining $s$ and $t$ in $A_{n,m-2n+3}$ is at most $4$. The proof is analogous to~\ref{it1:low}. 
    \item Let $G$ be any two-terminal graph in $T_{n,m}$ and let $i$ be any integer in $\{1,2,3,4\}$. As $d\geq 4$, Lemma~\ref{lemma:distances} gives that $N_i^d(G)=N_i(G)$. As a consequence, $T_{n,m}^4(i)=T_{n,m}(i)$ for each $i\in \{1,2,3,4\}$. During the proof of Theorem~\ref{theorem:Xie}\ref{teo-not-regular}, Xie et al. proved that each $G$ in $T_{n,m}(4)$ is in $T_{n,m}^u$ and $\hat{G}$ is not almost-regular. Consequently, the set $T_{n,m}^4(4)$ consists of two-terminal graphs $G$ in $T_{n,m}^u$ such that $\hat{G}$ is not almost-regular. As $T_{n,m}^4(m)$ is included in $T_{n,m}^4(4)$, the result follows from Lemma~\ref{lemma:existenceLMRTTG}. \qedhere
\end{enumerate}
\end{proof}

\begin{theorem}
Let $d$ and $n$ be any pair of integers such that $d\geq 4$ and $n\geq 11$. There is no $d$-UMRTTG in $T_{n,m}$ when $20\leq m \leq 3n-9$ or when $3n-5\leq m \leq \binom{n}{2}-2$.
\end{theorem}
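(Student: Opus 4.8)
The plan is to handle the two edge ranges by two different mechanisms, both already available in the excerpt, and in each case conclude nonexistence via the machinery of local behaviour near the endpoints $0$ and $1$.

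For the range $20\leq m\leq 3n-9$, the strategy is to combine the uniqueness of the $d$-LMRTTG near $1$ with the counterexample at $\rho=1/2$ supplied by Lemma~\ref{lemma:half}, and then invoke Remark~\ref{remark:uniqueness}. I would split at $m=2n-3$. When $20\leq m\leq 2n-3$, Lemma~\ref{lemma:d4}\ref{it1:low} gives that $H_{n,m}$ is the unique $d$-LMRTTG near $1$; when $2n-3<m\leq 3n-9$, Lemma~\ref{lemma:d4}\ref{it2:medium} gives that $A_{n,m-2n+3}$ is the unique $d$-LMRTTG near $1$ (note $3n-9\leq 3n-6$, so the hypothesis of that item is met). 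By Lemma~\ref{lemma:existenceLMRTTG}, uniqueness means $T_{n,m}^d(m)$ is a singleton, so by Remark~\ref{remark:uniqueness} it suffices to exhibit one graph that strictly beats this candidate at a single point, for which I would take $\rho_0=1/2$ and $H=G_{n,m}$ from Lemma~\ref{lemma:half}. (For $n=11$ the subrange $m\leq 2n-3=19$ is empty, so only the $A_{n,\cdot}$ case occurs; otherwise the two subranges tile $20\leq m\leq 3n-9$.)

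The crucial bridge is that Lemma~\ref{lemma:half} is stated for the \emph{unconstrained} reliability $R$, whereas we need the strict inequality for $R^d$. Here I would observe that every path joining $s$ and $t$ inside any graph $A_{n,r}$ has length at most $4$: all extra edges $v_3v_{3+j}$ share the hub $v_3$, so the longest such path has the form $s,v_{3+j},v_3,v_{3+k},t$, after which neither endpoint can be extended. Consequently $G_{n,m}=A_{n',r'}\cup\overline{K_{n-n'}}$, together with the candidates $A_{n,m-2n+3}$ and $H_{n,m}$, all have their $s$-$t$ paths of length at most $4\leq d$. By Lemma~\ref{lemma:inherit} the functions $R^d$ and $R$ coincide on each of these graphs, so the inequalities of Lemma~\ref{lemma:half}\ref{half1}--\ref{half2} transfer verbatim: $R_{G_{n,m}}^d(1/2)=R_{G_{n,m}}(1/2)>R_{H_{n,m}}(1/2)=R^d_{H_{n,m}}(1/2)$, and likewise against $A_{n,m-2n+3}$. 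Thus the unique $d$-LMRTTG near $1$ is beaten at $\rho_0=1/2$, and Remark~\ref{remark:uniqueness} rules out any $d$-UMRTTG.

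For the range $3n-5\leq m\leq\binom{n}{2}-2$, the argument is a clean incompatibility between local behaviour near $0$ and near $1$. Any $d$-UMRTTG is simultaneously a $d$-LMRTTG near $0$ and near $1$. Proposition~\ref{proposition:regularity} (applicable since $d\geq 4\geq 3$, $n\geq 11\geq 6$, and $3n-5\leq m\leq\binom{n}{2}$) forces any $d$-LMRTTG near $0$ to have $\hat{G}$ almost-regular, while Lemma~\ref{lemma:d4}\ref{it3:high} forces any $d$-LMRTTG near $1$ to have $\hat{G}$ \emph{not} almost-regular; no graph can satisfy both, so no $d$-UMRTTG exists. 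The main obstacle is the bridge in the third paragraph, namely verifying the uniform bound of $4$ on $s$-$t$ path lengths in all the graphs involved so that Lemma~\ref{lemma:inherit} can upgrade the unconstrained inequalities of Lemma~\ref{lemma:half} to the $d$-constrained setting; everything else is routine checking of the hypotheses of each invoked result.
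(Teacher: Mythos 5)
Your proposal is correct and follows essentially the same route as the paper's proof: split the range $20\leq m\leq 3n-9$ at $m=2n-3$, use Lemma~\ref{lemma:d4}\ref{it1:low}--\ref{it2:medium} together with Lemma~\ref{lemma:half}, Lemma~\ref{lemma:inherit} and Remark~\ref{remark:uniqueness} to rule out a $d$-UMRTTG there, and derive a contradiction between Proposition~\ref{proposition:regularity} and Lemma~\ref{lemma:d4}\ref{it3:high} for $3n-5\leq m\leq\binom{n}{2}-2$. Your explicit verification that all $s$-$t$ paths in the graphs $A_{n',r'}$, $A_{n,m-2n+3}$ and $H_{n,m}$ have length at most $4$ is a welcome elaboration of the step the paper states only briefly.
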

\begin{proof}
Let $d$ and $n$ be any pair of integers such that $d\geq 4$ and $n\geq 11$. 
First, let us consider the case in which $20 \leq m \leq 3n-9$. 
Consider the two-terminal graph $G_{n,m}$ defined in Lemma~\ref{lemma:half}. 
If $m\leq 2n-3$ then, by Lemma~\ref{lemma:d4}\ref{it1:low}, we know that $H_{n,m}$ is the only $d$-LMRTTG near $1$ in $T_{n,m}$. By Lemma~\ref{lemma:half}, we know that $R_{G_{n,m}}(1/2)>R_{H_{n,m}}(1/2)$. As both $G_{n,m}$ and $H_{n,m}$ satisfy the conditions of Lemma~\ref{lemma:inherit} we conclude that $R_{G_{n,m}}^d(1/2)>R_{H_{n,m}}^d(1/2)$ thus by Remark~\ref{remark:uniqueness} 
no $d$-UMRTTG exists. Now, if $2n-3<m\leq 3n-9$ then the reasoning is analogous. Lemma~\ref{lemma:d4}\ref{it2:medium} gives that $A_{n,m-2n+3}$ 
is the only $d$-LMRTTG near $1$ in $T_{n,m}$, while Lemma~\ref{lemma:half} 
confirms again that $R_{G_{n,m}}^d(1/2)>R_{A_{n,m-2n+3}}^d(1/2)$ thus by Remark~\ref{remark:uniqueness} no $d$-UMRTTG exists.

Finally, if $3n-5\leq m \leq \binom{n}{2}-2$ then Lemma~\ref{lemma:d4}\ref{it3:high} gives that each $d$-LMRTTG near $1$ in $T_{n,m}$ satisfies that $\hat{G}$ is not almost-regular. However, Proposition~\ref{proposition:regularity} gives that $\hat{G}$ is almost regular. 
As each $d$-UMRTTG must be both $d$-LMRTTG near $0$ and $1$ no $d$-UMRTTG exists, and the theorem follows.
\end{proof}


\section*{Acknowledgment}
This work is partially supported by City University of New York project entitled \emph{On the problem of characterizing graphs with maximum number of spanning trees} with grant number 66165-00. The author wants to thank Professors Mart\'in Safe and Louis Petingi for their helpful comments that improved the presentation of this manuscript as well as Professor Guillermo Durán for his permanent support.

\end{document}